
%

\documentclass{amsart}

\usepackage{amsthm}

\newtheorem{theorem}{Theorem}[section]
\newtheorem{corollary}{Corollary}[theorem]
\newtheorem{lemma}[theorem]{Lemma}
\newtheorem{proposition}[theorem]{Proposition}

\theoremstyle{definition}
\newtheorem{definition}[theorem]{Definition}
\newtheorem{example}[theorem]{Example}

\theoremstyle{remark}
\newtheorem*{remark}{Remark}

\numberwithin{equation}{section}

\newcommand{\R}{\mathbb{R}}  
\newcommand{\Z}{\mathbb{Z}}  
\newcommand{\N}{\mathbb{N}}  
\newcommand{\Nb}{\mathfrak{N}}  
\newcommand{\Orth}{{\mathbb O}}
\newcommand{\interior}{\mathop{\it int}}
\newcommand{\relint}{\mathop{\it relint}}

\newcommand{\sd}{\mathop{\rm Sd}}
\newcommand{\cd}{\mathop{\rm Cd}}
\newcommand{\conv}{\mathop{\rm conv}}
\newcommand{\dvee}{\mathord{\mathrm v}}

\begin{document}

\title{Neighbors, Generic Sets and Scarf-Buchberger Hypersurfaces}


\author{James J.~Madden}
\address{Louisiana State University, Baton Rouge}
\curraddr{}
\email{madden@math.lsu.edu, jamesjmadden@gmail.com}
\thanks{}

\author{Trevor McGuire}
\address{North Dakota State University, Fargo}
\curraddr{}
\email{trevor.mcguire@ndsu.edu, trevor.e.mcguire@gmail.com}
\thanks{}

\subjclass[2010]{Primary }

\keywords{}

\date{}

\dedicatory{}

\begin{abstract} 
  The present paper is motivated by the need to generalize the construction of the Scarf complex in order to give combinatorial resolutions of a much broader class of modules than just the monomial ideals.  For any subset $A\subseteq \R^n$, let $\Nb(A)$ denote the collection of all subsets $B\subseteq A$ such that  there is no $a\in A$ that is strictly less than the supremum of $B$ in all coordinates.      We show that if $A\subseteq \Z^n$ is generic (in a sense appropriate for this context), then $\Nb(A)$ is a locally finite simplicial complex.  Moreover, if $A$ is generic, then the barycentric subdivision of $\Nb(A)$ is equivalent to a triangulation of a PL hypersurface in $\R^n$.  This gives us natural generalizations of the notions of ``staircase surface'' and ``Buchberger graph,'' see \cite[ch.~3]{MS}, to arbitrary dimension.  (This seems to be a new result, even in the well-studied case that $A$ is a finite subset of $\N^n$.)  We give examples that show that when $A$ is infinite, $\Nb(A)$ may have complicated topology, but if there are at most finitely many elements of $A$ below any given $b\in \R^n$, then $\Nb(A)$ is locally contractible. $\Nb(A)$ can therefore be used to construct locally finite free resolutions of  sub-$k[\N^n]$-modules of the group algebra  $k[\R^n]$ ($k$ is a field).  We prove various additional facts about the structure of $\Nb(A)$.\end{abstract}

\maketitle

\section{Introduction} 
In a study of integer programming \cite{Sc},  H.~Scarf introduced a certain simplicial complex constructed from a set $A$ of points in $\R^n$.    Bayer, Peeva and Sturmfels \cite{BPS} discovered a remarkable application Scarf's construction in algebra.  They defined the Scarf complex of a monomial ideal to be the complex obtained by Scarf's method when $A$ consists of the exponent vectors of a minimal monomial generating set of the ideal.  If the ideal is generic, then the Scarf complex supports a minimal free resolution.  

Let $k$ be a field and let $S=k[\N^n]$ be the polynomial algebra in $n$ variables.   In \cite{M}, the second author described  minimal free resolutions of certain ideals of $S$ generated by monomials and binomials.  As a key step in this work, it was necessary to generalize results from  \cite{BPS} to the case in which $A$ is an infinite subset of  $k[\Z^n]$.  Some generalizations had already been established in the case that $A$ is a subgroup of $\Z^n$, see \cite[ch.~9]{MS}, but further generalizations were needed.     Subsequent to \cite{M}, we sought to generalize and streamline some of the results there.  We found it convenient to develop the combinatorial foundations independently from the algebraic applications.  The present paper reports on this work.

Let $A$ a generic antichain in ${\R}^n$.  (See section 3 for definitions.)  Following the approach of Scarf \cite{Sc}, we construct a simplicial complex $\Nb(A)$ and a PL hypersurface $\partial dmA$ embedded in ${\R}^n$.      We then introduce a canonical triangulation $\cd \Nb(A)$ of  $\partial dmA$ and prove that it equivalent to the barycentric subdivision of $\Nb(A)$.  In case $A$ is the set of exponent vectors of a minimal generating set of a generic monomial ideal, $\Nb(A)$ is the Scarf complex of that ideal.  If $n=3$, the Buchberger graph \cite[\S 3.3]{MS} is contained in the one-skeleton of $\cd \Nb(A)$.  In a future paper, we apply the facts proved here about $\cd \Nb(A)$ to describe minimal free resolutions of monomial sub-$S$-modules of the Laurent algebra  $k[X_1^{\pm 1},\ldots, X_n^{\pm 1}]$ and---by using the equivariant methods described in \cite[ch.~9]{MS} and other tools---we will derive  combinatorial descriptions of resolutions of ideals of $k[\N^n]$ generated by monomials and binomials.  

The main contributions of the present paper are as follows. First, we recall the conceptual framework introduced in \cite{Sc} for the definition of the complex $\Nb(A)$ of $A$-free convex bodies.  Here $A$ may be any subset of $\R^n$.  We also recall and elaborate Scarf's geometric interpretation of ``generic'' for such sets.  Second, we prove that if $A\subseteq \Z^n$ is generic, then $\Nb(A)$ is locally finite (in the sense that every vertex is in at most finitely many simplices).  Note that \cite[Theorem 1.8]{Sc}---which is also, \cite[Theorem 9.14]{MS}---shows that if $A$ is a lattice then $\Nb(A)$ is locally finite, with no genericity assumption.  However, for the applications we have in mind, we need to have local finiteness when $A$ is not a lattice.  Third, we prove that the barycentric subdivision of $\Nb(A)$ for $A$ generic is a triangulation of a subset of a PL hypersurface in $\R^n$.  This gives meaning to the remark of \cite{BHS} that $\Nb(A)$ is an ``intricate folding of $\R^n$ into itself,'' and in fact indicates what the folding is.  Fourth, we show that when $A$ is infinite, $\Nb(A)$ may have complicated topology even if it is generic, but if $A$ is locally finite (in the sense that there are only finitely many vertices in any principle down-set) then $\Nb(A)$ is locally contractible.  This means that $\Nb(A)$ can be used to construct locally finite free resolutions of  sub-$S$-modules of the group algebra  $k[\R^n]$, where $k$ is a field.  Finally, we extend a result of \cite{BSS}, showing that if $A^\ast$ is the augmentation of $A$ by ideal points at infinity, then, $\Nb(A^\ast)$ is a triangulation of the $n$-simplex. 

The concepts discussed in section 3, as well as their geometric  interpretation in terms of translates of $-\Orth^n$ come from \cite{Sc}.  Definition 3.1 of the present paper is the same as \cite[Definition 1.3]{Sc}, except for inverting the order, and much of section 3 is explicitly or implicitly in \cite{Sc}.  The idea for Definition 7.1 is in \cite[Definition 1.4]{Sc}, and the notation we use is from \cite{BSS}.   In much of his work, Scarf considers an $m\times n$ ($m>n$) matrix $M$, and studies the subgroup $A:= M\Z^n \subseteq \R^m$. (Scarf calls this matrix ``$A$'' not ``$M$,'' but this clashes with the notation we have chosen.)    In \cite{BSS}, for example, the authors assume that the set $A$ lies in a hyperplane  $H\subseteq \R^m$  perpendicular to a vector $0<<\lambda \in \R^m$.  This assures that $A$ is an antichain.  They make additional assumptions about $M$ that imply that $A$ is generic in the sense that we define in section 3.      By  ``convex body,'' Scarf and co-authors mean an intersection of some $b-\Orth^m$ with $H$.  In their terminology, a convex body is ``lattice-free'' if $b-\interior\Orth^m$ contains no points of $A$.  It is maximal if: for all $b'\geq b$, $b'-\interior\Orth^m$ lattice-free $\Rightarrow$ $b'=b$.  One important difference between the situation considered in \cite{BSS} and the present in that we do not require $A$ to lie in a hyperplane.  It need not be contained in any proper affine subspace of $\R^n$, or even any finite union of proper affine subspaces, in order for our methods to apply.
   
Olteanu and Welker \cite{OW} have studied the abstract simplicial complex $\Nb(A)$ (as defined in section 3, below) in the case that $A$ is an antichain in $\N^n$, showing, among other things, that it is always contractible.   We recapitulate, simplify and apply some of their work in section 6.  They define the Buchberger complex of a monomial ideal as follows (translated into our notation):

\begin{definition} Let $I\subseteq k[x_1,\ldots, x_n]$ be a monomial ideal with minimal generating set $G_I$. The {\it Buchberger complex\/} of $I$ is the collection of all subsets $B\subseteq G_I$ such that for every $g\in G_I$, there is at least one coordinate in which the exponent vector of $g$ equals or exceeds the exponent vector of $\mathrm{LCM}(B)$.\end{definition}

We would call this $\Nb(G_I)$.  The content of the present paper differs from \cite{OW} in the following respects.  First, in \cite{OW}, $G_I$ is always a finite antichain in $\N^n$, whereas the antichains $A$ that we consider are contained in $\R^n$ and may be infinite.  Second, \cite{OW} devotes considerable attention to $\Nb(G_I)$ without any genericity assumptions, but our attention is devoted exclusively to examining consequences of the generic assumption.  Third, Olteanu and Welker determine properties of the abstract simplicial complex $\Nb(G_I)$ and a related complex $P(G_I)$ (which we describe in section 5, below).  One of our main results is to exhibit a concrete realization of a barycentric subdivision of $\Nb(A)$ as a subcomplex of a piecewise linear hypersurface in $\R^n$.

\section{Notation for $\R^n$, $\Z^n$ and other posets}

This section explains the notation used throughout this paper.   We will use lower-case Roman letters to denote elements of $\R^n$. Subscripts, as in  $x_1, x_2,\ldots$, are used to distinguish between different elements of $\R^n$.  The $i^{th}$ component of $x$ is denoted $\pi_i(x)$.   We  employ the following notation for the coordinate-wise partial order on $\R^n$: 
\begin{align*}
x\leq y\quad:\,\Leftrightarrow\quad& \pi_1(x)\leq\pi_1(y)\;\&\;\cdots\;\&\;\pi_n(x)\leq\pi_n(y),\\
x< y\quad:\,\Leftrightarrow\quad&\hbox{$x\leq y$ and $\pi_i(x)<\pi_i(y)$ for some $i\in\{1,2,\ldots,n\}$},\\
x<< y\quad:\,\Leftrightarrow\quad&\pi_1(x)<\pi_1(y)\;\&\;\cdots\;\&\;\pi_n(x)<\pi_n(y).
\end{align*}
Under this order, $\R^n$ is a distributive lattice.  The supremum and infimum are denoted   $x\vee y$ and $x\wedge y$, respectively; $x^+:=x\vee 0$ and  $x^-:=-x\vee 0$.  Every subset $X$ of $\R^n$ with an upper bound (lower bound) in $\R^n$ has a least upper bound, denoted $\vee X$ (greatest lower bound, denoted
$\wedge X$) in $\R^n$.   A subset  $X$ of $\R^n$  is called an {\it antichain} if for all $x, y\in X$, $x\leq y\;\Rightarrow\;x= y$. $X$ is called an {\it up-set in $\R^n$\/} (a {\it down-set in $\R^n$\/}) if $x\in X$ and $x\leq y\in \R^n$ ($x\geq y\in \R^n$)  implies $y\in X$.  

The {\it positive orthant\/} of $\R^n$, denoted  $\Orth^n$, is the set $\{\,x\in \R^n\mid 0\leq x\,\}=[0, +\infty)^n$.  If $X\subseteq \R^n$,  $X+\Orth^n$ ($X-\Orth^n$) is the smallest up-set in $\R^n$ (smallest down-set in $\R^n$) containing $X$.  The {\it bonnet over $X$\/} is the smallest down-set that contains $X$ and is closed under the operation of taking suprema, i.e., it is the lattice ideal of $\R^n$ generated by $X$.   If $X$ has an upper bound, then the bonnet over $X$ is $(\vee X)-\Orth^n$.  

One forms the {\it order completion\/} of $\R$ by adjoining elements $-\infty$ and $+\infty$ to $\R$ and ordering the result such that $-\infty< x<y< +\infty$ for all $x<y$ in $\R$.  The result is denoted $[-\infty, +\infty]$.   {\it Every\/} subset of $[-\infty, +\infty]^n$ has a supremum in $[-\infty, +\infty]^n$.  We denote this  $\vee X$, relying on the context to make it clear that we are taking the supremum in $[-\infty, +\infty]^n$.  If $X\subseteq \R^n$ and $X$ is bounded in $\R^n$, then obviously the supremum of $X$ in $[-\infty, +\infty]^n$ is the same as the supremum of $X$ in $\R^n$.    The {\it bonnet over $X$ in $[-\infty, +\infty]^n$\/} is the down-set of $\vee X$.   

The {\it interior of $\Orth^n$\/}, denoted
$\interior\Orth^n$, is $\{\,x\in \R^n\mid x>>0\,\}$.  Let $J\subseteq \{1,2,\ldots,n\,\}$.  The {\it $J^{th}$ face of  $\Orth^n$\/} is 
$$\Orth^n_J:=\{\,x\in \Orth^n\mid \hbox{$\pi_j(z)=0$ for all $j\in J$}\,\}.$$  Note that $\Orth^n_\emptyset = \Orth^n$ and  $\Orth^n_{\{1,\ldots,n\}} = \{0\}$.  Moreover, $$\Orth^n_{J\cup K}=\Orth^n_J\cap \Orth^n_K,\eqno{(1.1)}$$ and $\Orth^n_{J\cap K}$ is the smallest convex cone containing both $\Orth^n_J$ and $\Orth^n_K$.    The cardinality of $J$ is the {\it codimension of $\Orth^n_J$\/}.   
The {\it relative interior of $\Orth^n_J$\/}, denoted
$\relint\Orth^n_J$, is $\{\,x\in \Orth^n_J\mid \hbox{$\pi_i(x)>0$ for all $i\not\in J$}\,\}$.    
 We apply similar terminology to translates of $-\Orth^n$.  The {\it interior of $x-\Orth^n$\/}  is $x-\interior\Orth^n$.  The $J^{th}$ face of $x-\Orth^n$ is $x-\Orth^n_J$, and its relative interior is $x-\relint \Orth^n_J$.
 
As a sub-poset of $\R^n$, $\Z^n$ is closed under $\vee$ and $\wedge$: if $X\subseteq \Z^n$, then $\vee X$, if it exists in $\R^n$, lies in $\Z^n$.  All of the previous notation restricts to $\Z^n$ by intersection.  For example,  the {\it positive orthant of $\Z^n$} is  $\N^n:=  \{\,\alpha\in \Z^n\mid 0\leq \alpha\,\}=\Z^n\cap \Orth^n$.      When dealing with elements of $\Z^n$ we sometimes use Greek rather than Roman letters.

If $Q\subseteq P$ are posets, and $p\in P$ then $Q_{\leq p} :=\{\,q\in Q\mid q\leq p\,\}$ and $Q_{< p} :=\{\,q\in Q\mid q< p\,\}$.  We also use interval notation, e.g., for $p_1, p_2\in P$, $[p_1, p_2)_Q:= \{\,q\in Q\mid p_1\leq q< p_2\,\}$. When it is obvious what set is being referred to, we may omit the subscript.    Note that $\Orth^n = \R^n_{\geq 0}$ and $\interior\Orth^n = \R^n_{>> 0}$.

\section{Neighbors and generic sets}

Most of what we present in this section is a rephrasing of material from \cite{Sc}.  Lemmas \ref{genericlemma} and \ref{codimlemma}  elaborate on that material. Our terminology 

\begin{definition} Let $A, B\subseteq \R^n$.  We say that $B$ is {\it $A$-neighborly\/} if: $a)$ $B$ has an upper bound, and hence a least upper bound $\vee B$,  and $b)$ $\vee B-\interior\Orth^n$ contains no elements of $A$, i.e., there are no points of $A$ in the interior of the bonnet over $B$.  \end{definition} 

Typically, we are only interested in cases were $B\subseteq A$, but the definition makes sense without this assumption.  We say that $y,y'\in \R^n$ are $A$-neighbors if  $\{y,y'\}$ is $A$-neighborly.  The set of elements of $A$ that are $A$-neighbors of $y\in \R^n$ is denoted  ${\rm nbr}_A(y)$.

\begin{definition} The set of all finite $A$-neighborly subsets of $A$ is denoted $\Nb(A)$. The collection of all $A$-neighborly subsets of $A$ containing $d+1$ elements is denoted $N_d(A)$.\end{definition}

 If $B'\subseteq B$, then the bonnet over $B'$ is contained in the bonnet over $B$ and the interior of the bonnet over $B'$ is contained in the interior of the bonnet over $B$.  Hence, if $B$ is finite and $A$-neighborly, then any subset of $B$ is $A$-neighborly.  Accordingly, we have the following:

\begin{lemma}  $\Nb(A)$ is an abstract simplicial complex.\end{lemma}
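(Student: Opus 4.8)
The plan is to verify directly the two defining properties of an abstract simplicial complex for the collection $\Nb(A)$: that every member is a finite set, and that the collection is downward closed under passage to subsets. The first is immediate, since $\Nb(A)$ was by definition a collection of finite subsets of $A$. So the real content is the downward-closure property, and here I would simply make precise the remark that immediately precedes the statement. (If one insists that a simplicial complex be nonempty, one notes separately that $\Nb(A)$ contains, e.g., any $A$-neighborly singleton, when one exists; otherwise $\Nb(A)$ is the trivial complex.)

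Concretely: fix $B\in\Nb(A)$ and $B'\subseteq B$. Then $B'$ is finite because $B$ is. Since $B$ has an upper bound in $\R^n$ and $B'\subseteq B$, the same point is an upper bound for $B'$, so $\vee B'$ exists in $\R^n$ and, being below any upper bound of $B'$, satisfies $\vee B'\leq\vee B$. The bonnet over $B'$ is then $\vee B'-\Orth^n$ and the bonnet over $B$ is $\vee B-\Orth^n$; from $\vee B'\leq\vee B$ one obtains $\vee B'-\Orth^n\subseteq\vee B-\Orth^n$ and, in particular, $\vee B'-\interior\Orth^n\subseteq\vee B-\interior\Orth^n$. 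Since $B\in\Nb(A)$ means that $\vee B-\interior\Orth^n$ contains no point of $A$, the same holds for the smaller set $\vee B'-\interior\Orth^n$. Hence $B'$ satisfies clauses $a)$ and $b)$ of Definition 3.1, i.e.\ $B'$ is $A$-neighborly; being finite, it lies in $\Nb(A)$.

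There is no genuine obstacle here; the only points that need any care are the two monotonicity facts used above --- that $X\mapsto\vee X$ is order-preserving with respect to inclusion of subsets (a general fact about the lattice $\R^n$), and that $x\mapsto x-\interior\Orth^n$ is order-preserving (which is just translation-invariance of $\interior\Orth^n$ together with $x\leq y\Rightarrow x+c\leq y+c$). Both are already implicit in the treatment of bonnets in Section 2 and in the paragraph preceding the lemma, so the proof consists only in assembling these observations.
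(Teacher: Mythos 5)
Your argument is the same as the paper's: the paper's justification is the sentence immediately preceding the lemma, observing that if $B'\subseteq B$ then the (interior of the) bonnet over $B'$ is contained in that over $B$, so $A$-neighborliness passes to subsets. You have simply spelled out the monotonicity of $\vee$ and of $x\mapsto x-\interior\Orth^n$ that the paper leaves implicit; the proof is correct and essentially identical.
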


\noindent  Note that $N_{-1}(A)=\{\emptyset\}$.  If $A$ is an antichain, then $N_0(A)$ is equal to the set of all singleton subsets of $A$, but this can happen even when $A$ is not an antichain.  For example, let $A=\R\times\{0\}\subseteq \R^2$.


\begin{definition} We say that $A$ is {\it generic\/} if, whenever $B$ is an $A$-neighborly subset of $A$, there is at most one element of $A$ in each face of the bonnet over $B$.\end{definition}

\begin{lemma}
\label{facelemma}
Suppose $A$ is generic and $c-\interior\Orth^n$ contains no points of $A$.  Then there is at most one element of $A$ in each face of $c- \Orth^n$. \end{lemma}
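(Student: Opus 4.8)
The plan is to reduce the statement directly to our definition of \emph{generic}: if some face of $c-\Orth^n$ contained two distinct points of $A$, I would realize it as a face of the bonnet over a suitable $A$-neighborly subset of $A$, contradicting genericity. So suppose, toward a contradiction, that for some nonempty $J\subseteq\{1,\dots,n\}$ there are distinct $a_1,a_2\in A$ with $a_1,a_2\in c-\Orth^n_J$. (Here, as in the definition of \emph{generic}, ``face'' is understood to mean a face of positive codimension: the full bonnet $c-\Orth^n=c-\Orth^n_\emptyset$ may well contain many points of $A$, so the assertion can only concern proper faces.) The set to test against genericity is $B:=\{a_1,a_2\}$.

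First I would check that $B$ is $A$-neighborly. Since $a_1,a_2\le c$ — both lie in $c-\Orth^n$ — the point $c$ is an upper bound of $B$, so the supremum $\vee B=a_1\vee a_2$ exists in $\R^n$ and $\vee B\le c$. Hence $\vee B-\interior\Orth^n\subseteq c-\interior\Orth^n$, and the latter contains no point of $A$ by hypothesis. Thus $B$ has an upper bound and its bonnet has interior disjoint from $A$; so $B$ is $A$-neighborly, and by the description of bonnets in Section 2 the bonnet over $B$ is $\vee B-\Orth^n$.

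Next I would do the coordinate bookkeeping that makes the relevant faces coincide. From $a_i\in c-\Orth^n_J$ we have $\pi_j(a_i)=\pi_j(c)$ for every $j\in J$ and each $i\in\{1,2\}$, whence $\pi_j(\vee B)=\max_i\pi_j(a_i)=\pi_j(c)$ for all $j\in J$. Therefore $\pi_j(a_i)=\pi_j(\vee B)$ for $j\in J$, while $\pi_k(a_i)\le\pi_k(\vee B)$ for every coordinate $k$ since $a_i\le\vee B$; this is exactly the statement that $a_1,a_2\in\vee B-\Orth^n_J$, the $J^{th}$ face of the bonnet over $B$. Because $J\neq\emptyset$ this is a proper face, so it contains at most one element of $A$ by genericity — contradicting $a_1\neq a_2$. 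Hence no such $J$ exists, which is the assertion of the lemma.

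I do not expect a serious obstacle; the proof is an unwinding of definitions. The one point that has to be handled with care is the equality $\pi_j(\vee B)=\pi_j(c)$ for $j\in J$ in the third step, since this is precisely what forces the $J^{th}$ face of the bonnet over the two-element set $B$ to agree with the $J^{th}$ face of $c-\Orth^n$ (rather than with a proper subset of it on which $a_1$ or $a_2$ might fail to lie).
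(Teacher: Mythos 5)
Your proof is correct and follows essentially the same route as the paper: realize the two offending points in a proper face of the bonnet over a suitable $A$-neighborly subset and invoke genericity. The only cosmetic difference is your choice of $B=\{a_1,a_2\}$ where the paper takes $B=A\cap(c-\Orth^n)$; the coordinate bookkeeping ($\pi_j(a_i)=\pi_j(c)=\pi_j(\vee B)$ for $j\in J$, $a_i\le\vee B$) is identical, and your remark that ``face'' must mean a face of positive codimension is the correct reading of the definition.
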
 

\begin{proof}  The elements of $A$ that lie in any face of  $c-\Orth^n$ all lie in the corresponding face of the bonnet over $A\cap (c-\Orth^n)$.
 Indeed, suppose $B=A\cap (c-\Orth^n)$.  If $B$ is empty, there is nothing to prove.  Otherwise, let $b=\vee B$. If  $a\in B$ lies in $c-\Orth^n_J$, then $\pi_j(a) = \pi_j(c)$ for all $j\in J$.  Since $a\leq b\leq c$, $a$ is contained in $b-\Orth^n_J$.  \end{proof}

\begin{lemma}   The properties of $A$-neighborliness and of being generic are translation-invariant in the following sense: if $B$ is $A$-neighborly and $x\in \R^n$, then $x+B$ is $x+A$-neighborly, and if $A$ is generic, then so is $x+A$.\end{lemma}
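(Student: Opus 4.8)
\textit{Proof proposal.}
The plan is to observe that translation by a fixed vector $x$ is an automorphism of the partially ordered set $\R^n$, and that every construction entering the definitions of ``$A$-neighborly'' and ``generic'' --- least upper bounds, the positive orthant $\Orth^n$ and its faces, and bonnets --- is built from the order and is therefore carried to itself, suitably translated, by such an automorphism. Once this is made precise the statement becomes pure bookkeeping.

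First I would record the elementary facts. The map $t_x\colon\R^n\to\R^n$, $y\mapsto x+y$, preserves each of $\leq$, $<$ and $<<$ and has inverse $t_{-x}$. From this: (i) if $B\subseteq\R^n$ has an upper bound, then $x+B$ has an upper bound and $\vee(x+B)=x+\vee B$; and for any $c\in\R^n$ and any $J\subseteq\{1,\ldots,n\}$ one has $x+(c-\Orth^n_J)=(x+c)-\Orth^n_J$ and $x+(c-\interior\Orth^n)=(x+c)-\interior\Orth^n$, since $t_x$ commutes with coordinatewise operations while $\Orth^n_J$ and $\interior\Orth^n$ are cut out by conditions on coordinates. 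Hence (ii) $t_x$ carries the bonnet over $B$ (which, for $B$ bounded, is $(\vee B)-\Orth^n$) onto the bonnet over $x+B$, the $J^{th}$ face of the former onto the $J^{th}$ face of the latter, and the interior of one onto the interior of the other.

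Next I would deduce translation-invariance of $A$-neighborliness. Assume $B$ is $A$-neighborly. Then $B$ has an upper bound, so by (i) $x+B$ has an upper bound, namely $\vee(x+B)=x+\vee B$. An element $a\in A$ lies in $\vee B-\interior\Orth^n$ if and only if $x+a$ lies in $x+(\vee B-\interior\Orth^n)=\vee(x+B)-\interior\Orth^n$, again by (i); since $a\mapsto x+a$ is a bijection $A\to x+A$, it follows that the interior of the bonnet over $x+B$ meets $x+A$ exactly when the interior of the bonnet over $B$ meets $A$. The latter is empty by hypothesis, so $x+B$ is $(x+A)$-neighborly. Applying this with $-x$ in place of $x$ shows moreover that $C\mapsto -x+C$ is a bijection from the $(x+A)$-neighborly subsets of $x+A$ onto the $A$-neighborly subsets of $A$, with inverse $B\mapsto x+B$, because every subset of $x+A$ is $x+B$ for a unique $B\subseteq A$.

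Finally, for genericity, suppose $A$ is generic and let $C$ be an $(x+A)$-neighborly subset of $x+A$; write $C=x+B$ with $B$ an $A$-neighborly subset of $A$, as in the previous paragraph. By (ii) each face of the bonnet over $C$ has the form $x+F$, where $F$ is the corresponding face of the bonnet over $B$, and $t_x$ restricts to a bijection of $A\cap F$ onto $(x+A)\cap(x+F)$; since $A$ is generic, $A\cap F$ has at most one element, hence so does $(x+A)\cap(x+F)$. Therefore $x+A$ is generic. I expect no genuine obstacle here: the only point needing any care is fact (i), that the bonnet-, face- and interior-constructions intertwine with translation, and that is immediate from the coordinatewise descriptions.
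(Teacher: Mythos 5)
Your proof is correct and takes essentially the same approach as the paper, which disposes of the lemma in one line by invoking translation invariance of $\leq$; you have simply spelled out the bookkeeping that the paper leaves implicit.
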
  

\begin{proof}  This is immediate from the definitions and the translation invariance of $\leq$.\end{proof} 

\begin{lemma}
\label{genericlemma}
The following are equivalent:\begin{enumerate}
\item[$i)$] $A\subseteq \R^n$ is generic.
\item[$ii)$] If  $x, y$ are distinct elements of $A$ and $\pi_i(x) = \pi_i(y)$ for some $i\in \{1,2,\ldots,n\}$, then there is $z\in A$ with $z<<x\vee y$.
\end{enumerate}
\end{lemma}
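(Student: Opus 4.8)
The plan is to read the equivalence off the concrete descriptions from Section~2: for a set $B$ with an upper bound, the bonnet over $B$ is $(\vee B)-\Orth^n$, and its faces are the translates $(\vee B)-\Orth^n_J$; moreover a point $a$ lies in the proper face $(\vee B)-\Orth^n_J$ (with $\emptyset\ne J\subseteq\{1,\ldots,n\}$) exactly when $a\le\vee B$ and $\pi_j(a)=\pi_j(\vee B)$ for every $j\in J$. Thus the failure clause in Definition~3.4 — two distinct points of $A$ in a single face of the bonnet over an $A$-neighborly set — becomes, after choosing any coordinate $j\in J$, the assertion that two distinct points of $A$ agree with $\vee B$, and hence with each other, in coordinate $j$. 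This is the bridge to condition $(ii)$, and both implications are short once it is in place.

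For $(i)\Rightarrow(ii)$ I would argue by contradiction. Let $x\ne y$ in $A$ with $\pi_i(x)=\pi_i(y)$; since $\R^n$ is a lattice the join $x\vee y$ exists and $\pi_i(x\vee y)=\pi_i(x)=\pi_i(y)$. If there were no $z\in A$ with $z<<x\vee y$, then $(x\vee y)-\interior\Orth^n$ would miss $A$, so $\{x,y\}$ would be an $A$-neighborly subset of $A$ whose bonnet is $(x\vee y)-\Orth^n$. But then both $x$ and $y$ lie in the codimension-one face $(x\vee y)-\Orth^n_{\{i\}}$, putting two distinct points of $A$ in one face of the bonnet over an $A$-neighborly set — contrary to $(i)$. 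Hence the required $z$ exists.

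For $(ii)\Rightarrow(i)$, let $B\subseteq A$ be $A$-neighborly, put $b=\vee B$, and suppose a proper face $b-\Orth^n_J$ of the bonnet $b-\Orth^n$ contained two distinct elements $x,y$ of $A$. Choosing $i\in J$ (possible since $J\ne\emptyset$) gives $\pi_i(x)=\pi_i(b)=\pi_i(y)$, so $(ii)$ supplies $z\in A$ with $z<<x\vee y$. Since $x,y\in b-\Orth^n_J$ we have $x\le b$ and $y\le b$, hence $x\vee y\le b$, whence $z<<b$, i.e.\ $z\in b-\interior\Orth^n=(\vee B)-\interior\Orth^n$ — contradicting the $A$-neighborliness of $B$. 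So no proper face of the bonnet over an $A$-neighborly set contains two distinct points of $A$, which is exactly $(i)$.

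I do not anticipate a genuine obstacle; the points that need care are purely bookkeeping. First, one must read ``face of the bonnet'' in Definition~3.4 as a \emph{proper} face $b-\Orth^n_J$ with $J\ne\emptyset$ — the improper face is the whole bonnet, which of course contains all of $B$ — and it is exactly this that guarantees an index $i\in J$ in the second implication. Second, the monotonicity step $x,y\le b\Rightarrow x\vee y\le b$ is what converts the witness $z$ produced by $(ii)$ into a violation of $A$-neighborliness. One should also double-check that $(x\vee y)-\Orth^n_{\{i\}}$ really is a face of the bonnet over $\{x,y\}$, but this is immediate once that bonnet is identified with $(x\vee y)-\Orth^n$.
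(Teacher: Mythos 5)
Your proof is correct and follows essentially the same route as the paper's: for $(i)\Rightarrow(ii)$ you observe (via contradiction rather than directly) that two points agreeing in coordinate $i$ lie on the face $(x\vee y)-\Orth^n_{\{i\}}$ of the bonnet over $\{x,y\}$, so genericity forces the existence of a witness $z$; and for $(ii)\Rightarrow(i)$ you pass from two points on a common face of the bonnet over $B$ to a shared coordinate with $\vee B$, invoke $(ii)$, and use $x\vee y\le\vee B$ to contradict $A$-neighborliness. Your parenthetical caveat that ``face'' in Definition~3.4 must be read as a \emph{proper} face (i.e.\ $J\neq\emptyset$) is a fair and worthwhile clarification of the paper's phrasing, but it does not change the argument.
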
  

\begin{proof}   $i)\Rightarrow ii)$.  Assume $i)$ and assume $x, y\in A$ and $\pi_i(x) = \pi_i(y)$.  Then, $x$ and $y$ are not $A$-neighbors because they lie on the same face of the bonnet over $\{x,y\}$.  Therefore, there is an element $z\in A$ in the interior of the bonnet over $\{x,y\}$, so $z<<x\vee y$.
$ii)\Rightarrow i)$.  Let $B$  be a bounded subset of $A$.  We must show, using $ii)$, that if there are distinct points of $A$ lying on the same face of the bonnet over $B$, then $B$ is not neighborly.  Let $x$ and  $y$ be such points.  Now, $x, y\leq \vee B$ and for some $i$,  $\pi_i(x) = \pi_i(\vee B)=\pi_i(y)$.  By $ii)$, there is $z<< x\vee y\leq \vee B$, so $B$ is not neighborly. \end{proof} 

\begin{lemma}
\label{codimlemma}
Suppose $A$ is generic and $B\subseteq A$ is $A$-neighborly.  Then: \begin{enumerate}
\item[$i)$]   There is exactly one element of $B$ in each codimension-one face of the bonnet over $B$ (so, the cardinality of $B$ is at most $n$). 
\item[$ii)$]  The only elements of $A$ in the bonnet over $B$ are the elements of $B$ itself.
\item[$iii)$]   The sum of the codimensions of the minimal faces of the bonnet over $B$ that contain elements of $B$ is exactly $n$.
\item[$iv)$]  If $B'\subseteq A$ is also $A$-neighborly and $\vee B = \vee B'$, then $B=B'$
\end{enumerate}\end{lemma}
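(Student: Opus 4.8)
The plan is to set $b := \vee B$, so that the bonnet over $B$ is $b - \Orth^n$, whose codimension-one faces are $b - \Orth^n_{\{i\}} = \{\, x \le b \mid \pi_i(x) = \pi_i(b)\,\}$ for $i = 1,\dots,n$. I would prove $i)$ first, then read off $iii)$, $ii)$ and $iv)$ in that order, the only real content being a short disjointness-and-covering argument for certain index sets $J(x)$.

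For $i)$: since $b = \vee B$ forces $\pi_i(b) = \max_{x\in B}\pi_i(x)$, each face $b - \Orth^n_{\{i\}}$ contains at least one element of $B$; and since $B$ is $A$-neighborly, $b - \interior\Orth^n$ contains no element of $A$, so Lemma \ref{facelemma} (using that $A$ is generic) gives at most one element of $A$, hence at most one of $B$, in $b - \Orth^n_{\{i\}}$. For the cardinality bound I would observe that no $x\in B$ can satisfy $x<<b$ (that would place $x\in A\cap(b-\interior\Orth^n)$), so $J(x):=\{\,i\mid \pi_i(x)=\pi_i(b)\,\}$ is non-empty for each $x\in B$; and two distinct elements of $B$ cannot share an index in their $J$'s, since that would put both in the same codimension-one face. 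Hence $\{J(x)\}_{x\in B}$ is a family of pairwise disjoint non-empty subsets of $\{1,\dots,n\}$, giving $|B|\le n$.

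For $iii)$: the face $b-\Orth^n_{J(x)}$ contains $x$ in its relative interior and is the minimal face of the bonnet containing $x$, of codimension $|J(x)|$; distinct elements of $B$ give distinct such faces (their index sets are disjoint and non-empty), and because $b=\vee B$ every index $i$ lies in some $J(x)$. So the $J(x)$ partition $\{1,\dots,n\}$ and $\sum_{x\in B}|J(x)|=n$, which is the assertion. For $ii)$: if $a\in A$ and $a\le b$, then $a$ cannot satisfy $a<<b$ (that would put $a\in A\cap(b-\interior\Orth^n)$), so $\pi_i(a)=\pi_i(b)$ for some $i$, i.e.\ $a\in b-\Orth^n_{\{i\}}$; by $i)$ that face contains exactly one point of $A$ — the element of $B$ lying in it — so $a\in B$. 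Thus $A\cap(b-\Orth^n)=B$. For $iv)$: if $B'\subseteq A$ is $A$-neighborly with $\vee B'=b$, the same computation gives $A\cap(b-\Orth^n)=B'$, whence $B=B'$.

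I do not expect a serious obstacle: the proof rests entirely on Lemma \ref{facelemma} together with the single observation that $A$-neighborliness pushes every element of $B$ onto a proper (indeed codimension-one) face of the bonnet, after which $i)$--$iv)$ are bookkeeping with the index sets $J(x)$. The one point that deserves a line of care is confirming that $b-\Orth^n_{J(x)}$ really is the minimal face through $x$, i.e.\ that $\pi_i(x)<\pi_i(b)$ strictly for $i\notin J(x)$; but this is immediate from the definition of $J(x)$.
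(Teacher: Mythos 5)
Your argument is correct, and for parts $i)$--$iii)$ it follows essentially the same route as the paper: the index sets $J(x)$ you introduce are exactly the sets $J_i$ in the paper's proof of $iii)$, and the disjointness/covering argument is identical. The one genuine divergence is in $iv)$: you derive it directly from $ii)$, using that $A\cap(b-\Orth^n)=B$ and $A\cap(b-\Orth^n)=B'$ force $B=B'$. The paper instead observes that $\vee B=\vee B'$ makes $B\cup B'$ an $A$-neighborly subset of $A$ (it has the same bonnet, hence the same $A$-free interior), and then applies $i)$ to $B\cup B'$ to conclude each codimension-one face holds exactly one element of $B$, of $B'$, and of $B\cup B'$. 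Your route is a bit more economical since it reuses $ii)$; the paper's route has the small advantage of not needing $ii)$ at all, so $ii)$ and $iv)$ become logically independent consequences of $i)$. Two minor points of hygiene: when you cite ``by $i)$ that face contains exactly one point of $A$,'' note that $i)$ as stated only controls elements of $B$; the uniqueness over all of $A$ comes from genericity (or Lemma~\ref{facelemma}) plus the existence of an element of $B$ in that face, so it is cleaner to cite genericity directly there. And your invocation of Lemma~\ref{facelemma} for the ``at most one'' half of $i)$ is fine but unnecessary --- since $B$ is an $A$-neighborly subset of $A$, the definition of generic already gives at most one element of $A$ in each face of its bonnet, which is what the paper uses.
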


\begin{proof}  Addressing $i)$, there must be at least one element of $B$ in each face of the bonnet over $B$ even if $B$ is not generic, and if $B$ is generic, then by definition there is at most one.  Assertion $ii)$ is immediate from $i)$.  For $iii)$,  suppose  $B = \{y_1, \ldots,y_m\}$, $m\leq n$. The sets $$J_i:=\{\,j\mid \pi_j(y_i) = \pi_j(\vee B)\,\}$$ are disjoint and their union is $\{1,\ldots,n\}$, because each coordinate of $\vee B$ is determined by one of the $y_i$ in $B$.  For assertion $iv)$, suppose $\vee B = \vee B'$.  Then, $B\cup B'$ is $A$-neighborly.  Now by $i)$, each codimension-one face contains exactly one element of $B$, exactly one element of $B'$ and exactly one element of $B\cup B'$.  So, $B=B'$.   \end{proof} 

\begin{remark} The definitions and lemmas  in this section concerning $\Nb(A)$ and its properties generalize to antichains in $[-\infty,+\infty\,]^n$, since this set is isomorphic as an ordered set to the subset $[-1,1]^n$ of $\R^n$.  The material in subsequent sections also generalizes, since an order-isomorphism  $[-\infty,+\infty\,]\to[-1,1]$ (such as $(2/\pi)\arctan$) is also a topological equivalence.   Points with coordinates in $\{\pm\infty\}$ are referred to in \cite{Sc} as ``slack vectors''. \end{remark} 

\section{Weak $A$-neighbors and local finiteness}

In this section, we show that if $A$ is any generic subset of $\Z^n$, then $\Nb(A)$ has the property that every vertex belongs to at most finitely many simplices.  It is enough the show that every element of $A$ has finitely many $A$-neighbors, which is what Theorem \ref{locfin} asserts.  We use Dickson's Lemma to prove this, and as a bonus, we include a very simple proof of it. 
 
\begin{definition}  Suppose  $x, y\in \R^n$. The set $$\{\,z\in \R^n\mid \hbox{for $i=1,2, \ldots, n$, $\pi_i(z)$ is in the closed interval from $\pi_i(x)$ to $\pi_i(y)$}\,\}$$ is called the {\it rectangle from\/ $x$ to\/ $y$\/}. Suppose $A\subseteq \R^n$ and $y\in \R^n$.  We say that $x\in A$ is a {\it weak $A$-neighbor of $y$\/} if $x\not=y$ and there is no $z\in A$ other than $x$ and $y$ in the rectangle from\/ $x$ to\/ $y$.\end{definition}

Weak $A$-neighborliness is translation-invariant in the following sense:
 $x$ is a weak $A$-neighbor of $y$ if and only if $x-y$  is a weak $(A-y)$-neighbor of $0$.  

\begin{remark}  The concept of weak $A$-neighbor is a generalization of the idea of a ``primitive lattice vector'' that appears in numerous works of B.~Sturmfels.  In \cite[p.\kern1.5pt 33]{St}, a vector $\alpha$ in a sublattice $L\subseteq \Z^n$ is defined to be primitive if there is no $\delta\in L$ other than $0$ and $\alpha$ such that $\delta^+\leq \alpha^+$ and $\delta^-\leq \alpha^-$.  Evidently, $\alpha\in L$ is primitive if and only if it is a weak $L$-neighbor of $0$. \end{remark}

\begin{lemma}Suppose $A\subseteq \R^n$ is generic and $x,y\in A$.  If $x$ is an $A$-neighbor of $y$, then $x$ is a weak $A$-neighbor of $y$.\end{lemma}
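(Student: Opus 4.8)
The plan is to deduce this almost immediately from Lemma~\ref{codimlemma}(ii). Note first that the hypothesis that $x$ is an $A$-neighbor of $y$ should be read with $x\ne y$ (and in any case $x\ne y$ is part of the definition of a weak $A$-neighbor, so there is nothing else to check when $x=y$); set $B=\{x,y\}$, which is an $A$-neighborly subset of $A$. Since $B$ is $A$-neighborly it has an upper bound, so its bonnet is exactly $(x\vee y)-\Orth^n=\{\,z\in\R^n\mid z\le x\vee y\,\}$.

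The key observation is that the rectangle from $x$ to $y$ is contained in this bonnet: if $z$ lies in the rectangle, then for every $i$ the number $\pi_i(z)$ lies in the closed interval from $\pi_i(x)$ to $\pi_i(y)$, hence $\pi_i(z)\le\max\{\pi_i(x),\pi_i(y)\}=\pi_i(x\vee y)$, and so $z\le x\vee y$. Now Lemma~\ref{codimlemma}(ii) applies, since $A$ is generic and $B$ is $A$-neighborly, and it says that the only elements of $A$ in the bonnet over $B$ are $x$ and $y$ themselves. Combining the two statements, the only elements of $A$ in the rectangle from $x$ to $y$ are $x$ and $y$, which is precisely the assertion that $x$ is a weak $A$-neighbor of $y$.

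I do not expect a real obstacle here: the substance is already in Lemma~\ref{codimlemma}, and the only thing to notice is that the rectangle appearing in the definition of a weak $A$-neighbor sits inside the bonnet over $\{x,y\}$. If one preferred not to invoke Lemma~\ref{codimlemma}, one could argue directly: given $z\in A$ in the rectangle with $z\ne x$ and $z\ne y$, one has $z\le x\vee y$, so either $z<<x\vee y$, contradicting outright that $(x\vee y)-\interior\Orth^n$ meets $A$, or else $\pi_i(z)=\pi_i(x\vee y)$ for some $i$, in which case $\pi_i(z)$ agrees with $\pi_i(x)$ or with $\pi_i(y)$; applying Lemma~\ref{genericlemma}(ii) to the distinct pair $\{z,x\}$ (resp.\ $\{z,y\}$) yields a point of $A$ strictly below $z\vee x\le x\vee y$ (resp.\ $z\vee y\le x\vee y$), again contradicting that $\{x,y\}$ is $A$-neighborly. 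The only bookkeeping to watch is the distinctness $z\ne x$, $z\ne y$, which is exactly what is assumed about a point of the rectangle other than $x$ and $y$.
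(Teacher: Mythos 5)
Your main argument is the same as the paper's, just phrased directly rather than as a contrapositive: show that any $z$ in the rectangle from $x$ to $y$ satisfies $z\le x\vee y$, i.e.\ lies in the bonnet over $\{x,y\}$, and then invoke Lemma~\ref{codimlemma}(ii) to rule out a third point of $A$ there. The paper verifies $z\le x\vee y$ via $(z-y)^+\le (x-y)^+$ and $(z-y)^-\le (x-y)^-$, whereas you do it coordinatewise, but that is cosmetic. Your proposal is correct and matches the paper's approach.
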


\begin{proof} Suppose $x\in A$ is not a weak $A$-neighbor of $y$.  Pick $z\in A$ other than $x$ and $y$ such that: for all $i\in \{1,2,\ldots,n\}$,  $\pi_i(z)$ is in the closed interval from $\pi_i(x)$ to $\pi_i(y)$ .  Then $(z-y)^+\leq (x-y)^+$ and $(z-y)^-\leq (x-y)^-$ .  It follows that $z- y\leq (x- y)\vee 0$, so $z\leq x\vee y$.  By Lemma 2.5.$ii$, $x$ and $y$ are not $A$-neighbors.\end{proof}

\begin{lemma} 
\label{wlocfin} 
Suppose $A\subseteq \Z^n$ and $\beta\in \Z^n$.  Then $\beta$ has at most a finite number of weak $A$-neighbors.\end{lemma}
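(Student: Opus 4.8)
The plan is to reduce the statement to Dickson's Lemma by a translation-and-orthant-decomposition argument. First I would use the translation invariance of weak $A$-neighborliness noted just above the lemma: $x$ is a weak $A$-neighbor of $\beta$ if and only if $x-\beta$ is a weak $(A-\beta)$-neighbor of $0$. So without loss of generality I may assume $\beta = 0$, and I must show that $0$ has only finitely many weak $A$-neighbors in $A\subseteq \Z^n$.

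Next I would stratify the potential neighbors by the sign pattern of their coordinates. For each subset $\sigma \subseteq \{1,\dots,n\}$, let $A_\sigma$ be the set of $x\in A$ with $\pi_i(x)\geq 0$ for $i\in\sigma$ and $\pi_i(x)\leq 0$ for $i\notin\sigma$. There are finitely many such $\sigma$, and every $x\in A$ lies in at least one $A_\sigma$, so it suffices to prove that $0$ has only finitely many weak $A$-neighbors inside each $A_\sigma$. Fix $\sigma$. The key observation is that the rectangle from $0$ to $x$, when $x\in A_\sigma$, depends only on the absolute values of the coordinates of $x$; concretely, if $x, x'\in A_\sigma$ and $|x'|\leq |x|$ coordinatewise, then $x'$ lies in the rectangle from $0$ to $x$. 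Therefore, if $x$ is a weak $A$-neighbor of $0$ with $x\in A_\sigma$, then no other element $x'$ of $A_\sigma$ can satisfy $|x'|\leq|x|$ coordinatewise — i.e., the image of the weak-neighbor set under the coordinatewise-absolute-value map $|\cdot|\colon A_\sigma \to \N^n$ is an antichain in $\N^n$.

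By Dickson's Lemma, every antichain in $\N^n$ is finite, so this image is finite. To finish I need the map $|\cdot|$ restricted to the weak $A$-neighbors in $A_\sigma$ to be finite-to-one; in fact it is injective on $A_\sigma$ up to the ambiguity of coordinates equal to zero. The cleanest way to handle this is to refine the stratification so that each stratum fixes, for every coordinate $i$, whether $\pi_i(x)=0$ or $\pi_i(x)$ is strictly positive or strictly negative — still finitely many strata — on which $|\cdot|$ is genuinely injective. (Alternatively: on $A_\sigma$ two elements with the same absolute-value vector but a coordinate of opposite sign would have that coordinate equal to $0$ for both, so they would be equal; a slightly careful bookkeeping avoids even the refinement.) Combining the finitely many strata gives finitely many weak $A$-neighbors of $0$, hence of $\beta$ after translating back.

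The main obstacle is not conceptual but the small bookkeeping around coordinates that are zero, where the sign is ambiguous and the map to $\N^n$ could in principle collapse distinct points or fail to be order-reflecting; the sign-pattern refinement described above is the way I would neutralize it. Note also that the lemma as stated does not assume $A$ is generic — genericity is used in the preceding lemma relating $A$-neighbors to weak $A$-neighbors, but the finiteness of weak $A$-neighbors needs only $A\subseteq\Z^n$ and Dickson's Lemma, so the proof should not invoke genericity at all.
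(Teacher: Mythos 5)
Your proof is correct and follows essentially the same route as the paper: translate to $\beta=0$, decompose $\Z^n$ into the $2^n$ orthants determined by sign patterns, observe that within each orthant a weak $A$-neighbor of $0$ is $\leq$-minimal (equivalently, the images form an antichain in $\N^n$), and invoke Dickson's Lemma. The one thing you can safely drop is the worry about fibers of $|\cdot|$: on a fixed stratum $A_\sigma$ the sign of every coordinate is determined by $\sigma$ (a coordinate equal to $0$ causes no collision, since $|x_i|=|x'_i|=0$ forces $x_i=x'_i=0$), so $|\cdot|$ is already injective on $A_\sigma$ and no refinement is needed; the paper avoids the issue entirely by composing with the diagonal matrix $\Delta$, which is a bijection $\Delta\N^n\to\N^n$.
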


\begin{proof}  Let $\Delta$ is a diagonal matrix all of whose diagonal entries are in $\{1, -1\}$.  A set of the form $\Delta \N^n$ is called an {\it orthant of $\Z^n$\/}.   Define a partial order $\leq_\Delta$ on $\Delta \N^n$  by $$\alpha\leq_\Delta\beta \;:\Longleftrightarrow\; \Delta\alpha\leq  \Delta\beta \;\Longleftrightarrow\;  \alpha^+\leq \beta^+ \;\&\;\alpha^-\leq \beta^-.$$  Then $\Delta \N^n$ is order-isomorphic to $\N^n$.  Moreover, $\alpha\in \Delta\N^n$ is a weak $A$-neighbor of $0$ if and only if $\alpha$ is  $\leq_\Delta$-minimal in $A\cap\Delta\N^n\setminus\{0\}$.  Applying Dickson's Lemma (see below) to each orthant, it follows that for any $A\subseteq\Z^n$, $0$ has at most finitely many weak $A$-neighbors.  The general result follows by translation invariance of $\leq_\Delta$.\end{proof}

The following is an immediate consequence of the last two lemmas.

\begin{theorem}
\label{locfin}  Suppose $A\subseteq \Z^n$ is generic and $\beta\in A$.  Then ${\rm nbr}_A(\beta)$ is finite. \end{theorem}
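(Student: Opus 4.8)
The plan is to assemble Theorem~\ref{locfin} directly from the two preceding lemmas, so the real content is already done and the ``proof'' is essentially a one-line deduction; what I would actually write is the chain of implications together with a remark making explicit why no further work is needed. First I would observe that by the Lemma immediately preceding Lemma~\ref{wlocfin}, if $A\subseteq\R^n$ is generic and $x,y\in A$, then $x$ being an $A$-neighbor of $y$ implies $x$ is a weak $A$-neighbor of $y$; in other words ${\rm nbr}_A(\beta)$ is a subset of the set of weak $A$-neighbors of $\beta$. Then I would invoke Lemma~\ref{wlocfin}, which applies since $A\subseteq\Z^n$ and $\beta\in\Z^n$, to conclude that the set of weak $A$-neighbors of $\beta$ is finite. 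A subset of a finite set is finite, so ${\rm nbr}_A(\beta)$ is finite.

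The only point worth spelling out carefully is that the hypotheses of the two lemmas line up: genericity of $A$ is needed for the first lemma (weak-neighbor containment), while the second lemma needs only $A\subseteq\Z^n$ and $\beta\in\Z^n$ and no genericity — so the combined hypothesis ``$A\subseteq\Z^n$ generic, $\beta\in A$'' supplies everything. I would phrase the proof as: ``By Lemma [weak neighbor], ${\rm nbr}_A(\beta)$ is contained in the set of weak $A$-neighbors of $\beta$; by Lemma~\ref{wlocfin} the latter set is finite.'' Since the excerpt already announces this as ``an immediate consequence of the last two lemmas,'' no new ideas are required and there is no genuine obstacle; the substantive work was pushed into Lemma~\ref{wlocfin} (via the reduction to Dickson's Lemma on each of the $2^n$ orthants) and into the weak-neighbor lemma (via Lemma~\ref{codimlemma}$(ii)$, i.e. Lemma 2.5.$ii$ in the text). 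If I wanted to be slightly more self-contained I might also recall in the proof that finiteness of ${\rm nbr}_A(\beta)$ for every $\beta\in A$ is exactly the statement that every vertex of $\Nb(A)$ lies in only finitely many simplices, since each simplex containing $\beta$ is a subset of $\{\beta\}\cup{\rm nbr}_A(\beta)$ — but that remark belongs to the surrounding discussion rather than to the proof of the theorem itself, so I would leave it out of the proof environment.
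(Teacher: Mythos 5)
Your proposal is correct and is essentially identical to the paper's argument: the paper likewise states the theorem as ``an immediate consequence of the last two lemmas,'' namely the lemma that $A$-neighbors are weak $A$-neighbors when $A$ is generic, combined with Lemma~\ref{wlocfin}. Nothing to add.
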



In the proof of Lemma \ref{wlocfin}, we used Dickson's Lemma.  Many proofs of this  have appeared in the literature.  Below, we present a particularly quick and transparent proof that does not seem as well-known as it deserves to be.  We say that a sequence $\{\,\alpha_i\mid i\in \N\,\}$ of elements $\alpha_i$ of some poset is {\it  weakly increasing\/} if $\alpha_i\leq \alpha_{i+1}$ for all $i$.  By a {\it subsequence\/} of $\{\,\alpha_i\mid i\in \N\,\}$, we mean a sequence  $\{\,\alpha_{s(i)}\mid i\in \N\,\}$  determined by a strictly increasing function $s:\N\to\N$.    Note that every sequence of elements of $\N$ has a weakly increasing subsequence, since any unbounded sequence in $\N$ contains a strictly increasing subsequence and any bounded sequence in $\N$ contains a constant subsequence.

\begin{lemma} [Dickson's Lemma]  Every sequence 
in $\N^n$ contains a weakly increasing subsequence.  Thus, $\N^n$ contains no infinite antichains.  In particular, the set of $\leq$-minimal elements in any subset of $\N^n$ is finite.\end{lemma}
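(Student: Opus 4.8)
The plan is to prove Dickson's Lemma by induction on $n$, using the elementary observation about sequences in $\N$ that was recorded just before the statement. The base case $n=1$ is exactly that observation: every sequence in $\N$ has a weakly increasing subsequence. For the inductive step, suppose the result holds for $n$ and let $\{\alpha_i\mid i\in\N\}$ be a sequence in $\N^{n+1}$. Write each $\alpha_i=(\beta_i,c_i)$ with $\beta_i\in\N^n$ and $c_i\in\N$. First I would apply the inductive hypothesis to the sequence $\{\beta_i\}$ in $\N^n$ to extract a subsequence indexed by a strictly increasing $s:\N\to\N$ along which $\beta_{s(i)}\leq\beta_{s(i+1)}$ for all $i$. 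Then I would apply the $n=1$ observation to the sequence $\{c_{s(i)}\mid i\in\N\}$ in $\N$ to extract a further subsequence along which the last coordinate is also weakly increasing. Composing the two index functions yields a subsequence of the original sequence that is weakly increasing in $\N^{n+1}$, completing the induction.

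Once the subsequence statement is established, the remaining assertions follow formally. If $\N^n$ contained an infinite antichain $\{\alpha_i\mid i\in\N\}$ with all $\alpha_i$ distinct, the weakly increasing subsequence would furnish indices $i<j$ with $\alpha_{s(i)}\leq\alpha_{s(j)}$ and $\alpha_{s(i)}\neq\alpha_{s(j)}$, contradicting the antichain property; hence there are no infinite antichains. Finally, for any subset $X\subseteq\N^n$, the set $M$ of $\leq$-minimal elements of $X$ is an antichain (any two distinct minimal elements are incomparable, by minimality), so $M$ is finite.

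I do not expect any serious obstacle here: the argument is a clean induction, and the only point requiring a little care is the bookkeeping of composing two subsequence-selection functions, which is routine. The one thing worth stating explicitly is that a subsequence of a weakly increasing sequence is still weakly increasing, so that passing to the further subsequence in the $c$-coordinate does not destroy the monotonicity already achieved in the $\beta$-coordinates; this is immediate from transitivity of $\leq$ and the fact that $s$ and the second selection function are both strictly increasing.
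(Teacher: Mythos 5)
Your proof is correct and takes essentially the same approach as the paper: the paper unrolls the induction as ``repeat the one-variable extraction $n$ times, one coordinate at a time,'' while you package the same iteration as a formal induction on $n$. Both rest on the same key observation that every sequence in $\N$ has a weakly increasing subsequence, and the derivation of the antichain and minimal-element consequences is standard in both.
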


\begin{proof}  Any sequence of elements of $\N^n$
contains a subsequence in which the last coordinate
is weakly increasing.  This, in turn, contains a subsequence in
which the $(n-1)$th coordinate is weakly increasing.  After $n$
steps, we have a subsequence satisfying the required condition. \end{proof} 

\section{Step hypersurfaces}

In \cite{Sc}, Scarf alludes to hypersurface that we examine in this section, but does not study its structure in any detail.  In \cite[Definition 3.6]{MS}, the authors define the ``staircase surface'' of a monomial ideal $I$ in $k[\N^3]$. This is the $n=3$ case of the object analyzed in the present section.  The ``Buchberger graph of $I$,''  \cite[Definition 3.4]{MS}, is the one-skeleton of the Scarf complex $\Nb(G_I)$.  As seen in illustration following 3.4, the Buchberger graph has additional structure: each edge contains the supremum of the vertices that it connects.  In fact, with these points taken into account, Buchberger graph is the  barycentric subdivision of the one-skeleton of $\Nb(A)$.   In the present section, we give a construction that generalizes the staircase surface to any dimension and displays the Buchberger graph as a special instance of a general construction involving barycentric subdivision that yields a concrete triangulation of the generalized staircase surface.  At the end of the section, we give some examples showing that $\Nb(A)$ may have complicated---in particular, non-simply-connected---topology when $A$ is infinite.  In the section following, we establish conditions that preclude such behavior.

\begin{lemma}
\label{bdlemma}  
Let $D$ be a down-set in $\R^n$ and let $\partial D$ denote its topological boundary.  Then, $$b\in \partial D\quad\Longleftrightarrow\quad (b+\interior\Orth^n)\subseteq (\R^n\setminus D)\;\;\hbox{and}\;\;b-\interior\Orth^n\subseteq D.$$ \end{lemma}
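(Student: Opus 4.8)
The plan is to prove both implications directly from the topological definition of the boundary $\partial D = \overline{D} \setminus \interior D$ of the down-set $D$, combined with the order structure.

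For the forward direction, suppose $b \in \partial D$. To see that $b + \interior\Orth^n \subseteq \R^n \setminus D$, argue by contradiction: if some $c \in D$ with $c >> b$ existed, then since $D$ is a down-set it would contain the full box $\{\,x \mid b \leq x \leq c\,\}$ (every point of this box is $\leq c$), and this box has nonempty interior containing $b$; hence $b \in \interior D$, contradicting $b \in \partial D$. Wait — one must be slightly careful: the box $b \leq x \leq c$ contains $b$ only on its boundary. Instead take $c' $ with $b << c' << c$; then $b$ lies in the open box $c' - \interior\Orth^n \cap (\text{translate})$... cleaner: the set $\{\,x \mid b - (c-b) << x << c\,\}$ is open, contains $b$, and lies below $c$, hence inside $D$. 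So $b \in \interior D$, a contradiction. For the other half, $b - \interior\Orth^n \subseteq D$: since $b \in \overline{D}$, there is a sequence $d_k \in D$ with $d_k \to b$. Given any $y << b$, for large $k$ we have $y \leq d_k$ (each coordinate of $y$ is strictly below the corresponding coordinate of $b$, hence eventually below that of $d_k$), so $y \in D$ because $D$ is a down-set. Thus $b - \interior\Orth^n \subseteq D$.

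For the reverse direction, assume $b + \interior\Orth^n \subseteq \R^n \setminus D$ and $b - \interior\Orth^n \subseteq D$. The second condition immediately gives $b \in \overline{D}$, since $b$ is a limit of points $b - \tfrac1k(1,\ldots,1) \in b - \interior\Orth^n \subseteq D$. It remains to show $b \notin \interior D$. If $b$ were interior to $D$, then some open ball around $b$ would lie in $D$; this ball contains a point $b' >> b$, i.e.\ $b' \in b + \interior\Orth^n$, forcing $b' \in D$ and contradicting $b + \interior\Orth^n \subseteq \R^n \setminus D$. Hence $b \in \overline{D} \setminus \interior D = \partial D$.

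I do not anticipate a serious obstacle here; the argument is a routine interplay between the order-theoretic fact that $D$ is a down-set and the elementary topology of $\R^n$ with its product (box) neighborhood basis. The one point requiring minor care is in the forward direction: from the mere existence of a point of $D$ strictly above $b$ one must produce an \emph{open} neighborhood of $b$ inside $D$, which is handled by choosing an interior point of the relevant box and using that $D$ is downward closed, or equivalently by noting that if $c \in D$ with $c >> b$ then the open box with center at the midpoint and appropriate radius, namely $\{\,x \mid 2b - c << x << c\,\}$, contains $b$ and lies below $c$. The symmetry between "up" and "down" in the two halves of the statement (one using that the complement of $D$ is an up-set, the other using that $D$ is a down-set) makes the bookkeeping transparent.
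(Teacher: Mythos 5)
Your proof is correct and takes essentially the same approach as the paper's: both exploit that $D$ is a down-set (and its complement an up-set) to produce open boxes lying inside either $D$ or $\R^n\setminus D$. The paper phrases the forward direction as a single contrapositive in two lines and dispatches the reverse direction by the box characterization of boundary points; your version is a bit more verbose (including the self-correction and the sequence argument for $b-\interior\Orth^n\subseteq D$, which could be replaced by the direct observation that $y<<b$ and $b\in\overline D$ already give a point of $D$ in the open up-set $y+\interior\Orth^n$), but the underlying reasoning is the same.
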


\begin{proof} By definition,  $b\in \partial D$ iff every open box centered on $b$ contains at least one point of $D$ and at least one point not in $D$; the implication $\Leftarrow$ follows.  To prove $\Rightarrow$, consider the contrapositive.  If  $z\in D\cap(b+\interior\Orth^n)$, then $b\in z-\interior\Orth^n\subseteq D$, so $b \not\in \partial D$.  If $z\in b-\interior\Orth^n\setminus D$, then $b\in z+\interior\Orth^n$, and $z+\interior\Orth^n\cap D=\emptyset$, so $b \not\in \partial D$.  \end{proof} 

\begin{definition}  Let $A\subseteq \R^n$.  Then $m A$ denotes the set of all suprema of maximal $A$-neighborly subsets of $A$, $dmA:=mA-\Orth^n$ and  $\partial dmA$ denotes the topological boundary of $dmA \subseteq \R^n$. \end{definition}  

{\it Throughout the remainder of this section, $A$ is assumed to be a generic antichain in $\R^n$.}    Since $A$ is an antichain, every singleton subset of $A$ is $A$-neighborly and therefore, since $A$ is generic, every element of $A$ is contained in a maximal $A$-neighborly subset of $A$ (since an $A$-neighborly set has at most $n$ elements, by Lemma \ref{codimlemma}). This implies that $A\subseteq dmA$.

\begin{lemma} If $B\subseteq A$ is $A$-neighborly, then $\vee B\in \partial dmA$.  In particular, $A\subseteq \partial dmA$.  \end{lemma}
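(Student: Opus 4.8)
The plan is to use the characterization of $\partial dmA$ supplied by Lemma~\ref{bdlemma} applied to the down-set $D = dmA$. Since $dmA = mA - \Orth^n$, a point $b$ lies in $\partial dmA$ if and only if $b - \interior\Orth^n \subseteq dmA$ and $(b + \interior\Orth^n) \cap dmA = \emptyset$. So given an $A$-neighborly $B \subseteq A$ with $b := \vee B$, I must verify these two conditions.

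For the first condition, I would argue that $b \in dmA$, which gives $b - \interior\Orth^n \subseteq dmA$ since $dmA$ is a down-set. Because $A$ is a generic antichain, Lemma~\ref{codimlemma} guarantees $B$ is contained in some maximal $A$-neighborly subset $B^* \supseteq B$; then $\vee B \le \vee B^* \in mA$, so $b = \vee B \in mA - \Orth^n = dmA$, and in fact every point $\le b$ lies in $dmA$. The second condition is the substantive one: I must show no point of $b + \interior\Orth^n$ lies in $dmA$, i.e., there is no $c \in mA$ and no $x >> b$ with $x \le c$ — equivalently, no $c \in mA$ with $c >> b$ is impossible to have strictly, but more carefully, I need: for every $c \in mA$, it is not the case that $b << c$... wait, rather that $b + \interior\Orth^n$ meets $c - \Orth^n$ only when... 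Let me restate: $(b + \interior\Orth^n) \cap dmA = \emptyset$ means no $c \in mA$ satisfies $c \in b + \interior\Orth^n$ with room to spare — precisely, it means there is no $c \in mA$ with $b << c'$ for some $c' \le c$; since $c' \le c$, this reduces to showing no $c \in mA$ has $b << c$. Hmm, but that's not quite it either, since we could have $b << c'$ with $c' \le c$ and $c' \ne c$. Actually $b + \interior\Orth^n$ meets $c - \Orth^n$ iff there is $z$ with $b << z \le c$ iff $b << c$ (taking $z$ close to $b$). So the second condition is: \emph{for every maximal $A$-neighborly $C \subseteq A$, $\vee C \not>> \vee B$}.

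To prove this last claim, suppose toward a contradiction that $C$ is maximal $A$-neighborly with $\vee C >> \vee B \ge$ (each element of $B$). Then every element of $B$ lies strictly below $\vee C$ in all coordinates, i.e., $B \subseteq (\vee C) - \interior\Orth^n$, contradicting the $A$-neighborliness of $C$ (whose defining property is that $(\vee C) - \interior\Orth^n$ contains no points of $A$) — as long as $B \ne \emptyset$. The case $B = \emptyset$ gives $\vee B = $ the bottom, but $\emptyset$ has no supremum in $\R^n$... actually $N_{-1}(A) = \{\emptyset\}$ is vacuously neighborly but $\vee\emptyset$ is not defined in $\R^n$, so the statement implicitly concerns nonempty $B$; I would note this or restrict to $B$ with at least one element. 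Finally, $A \subseteq \partial dmA$ follows by taking $B = \{a\}$ for each $a \in A$: singletons are $A$-neighborly since $A$ is an antichain, and $\vee\{a\} = a$.

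The main obstacle is the clean handling of the strict-inequality bookkeeping in the second condition — making sure that "$b + \interior\Orth^n$ disjoint from $dmA$" is correctly reduced to "no $\vee C$ strictly dominates $b$," and then connecting $B \subseteq (\vee C) - \interior\Orth^n$ to the failure of $A$-neighborliness of $C$. Everything else is a direct unwinding of definitions together with the already-established facts that $A$-neighborly sets are contained in maximal ones (Lemma~\ref{codimlemma}) and that $dmA$ is a down-set.
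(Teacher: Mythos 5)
Your proposal is correct and follows essentially the same route as the paper's proof: verify the two conditions of Lemma~\ref{bdlemma} for $D = dmA$, obtaining $\vee B \in dmA$ from containment in a maximal $A$-neighborly set, and obtaining $(\vee B + \interior\Orth^n)\cap dmA = \emptyset$ by observing that $\vee C >> \vee B$ for some maximal $C$ would force elements of $B\subseteq A$ into $\vee C - \interior\Orth^n$, contradicting $A$-neighborliness of $C$. The only difference is cosmetic: you reformulate the second condition as ``no $\vee C$ strictly dominates $\vee B$'' before deriving the contradiction, and you explicitly flag the $B=\emptyset$ edge case, both of which the paper handles implicitly.
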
 

\begin{proof}  Let $B\subseteq A$  be $A$-neighborly with supremum $b$.  Since $B$ is contained a maximal $A$-neighborly subset of $A$,  we have $b\in dmA$.  Let $x\in mA$.  Then $(b+\interior\Orth^n)\cap (x-\Orth^n) =\emptyset$ because if not, then for some $b\in B$ and some $0<<p\in \R^n$, $b+p \leq x$, so $b\leq x-p$, so $b$ lies in $x-\interior\Orth^n$ contrary to the assumption that $x$ is the supremum of an $A$-neighborly set.  Since $dmA=\cup\{\,x-\Orth^n\mid x\in mA\,\}$, $b+\interior\Orth^n\subseteq (\R^n\setminus dmA)$.  It follows from \ref{bdlemma} that  $b\in \partial dmA$.\end{proof} 

Suppose $\Delta$ is an abstract simplicial complex. Recall that the {\it abstract barycentric subdivision of $\Delta$\/}, which we here denote $\sd \Delta$, is constructed as follows.  The vertices of $\sd \Delta$ are in bijection with the simplices of $\Delta$, and if $\sigma$ is a simplex of $\Delta$, the corresponding vertex of $\sd \Delta$ will be denoted  $\dvee\sigma$.  Now, suppose $\sigma\in\Delta$.  Let $\pi$ be a total ordering of $\sigma$.  For $j=1,2,\ldots, m$ let $\sigma^\pi_j$ denote the set consisting of the first $j$ elements of $\sigma$ with respect to the ordering $\pi$.   Then, we declare $s(\sigma, \pi):=\{\dvee\sigma^\pi_1, \dvee\sigma^\pi_2,\ldots, \dvee\sigma^\pi_m\}$ to be a simplex of $\sd \Delta$, and of course each its subsets as well.  In general,  the simplices of $\sd \Delta$ are the subsets of the vertex set  created in this manner.  In particular, each $m$-dimensional simplex of $\Delta$ gives rise to $(m+1)!$  simplices  of dimension $m$ in $\sd \Delta$, as well as to the sub-simplices of these. 

Let us apply the construction in the previous paragraph to $\Nb(A)$.  Suppose $B\in N_{m-1}(A)$.  Let $\pi$ be a total ordering of $B$, and assume the elements of $B$ written in this order are $(b_1,\ldots, b_m)$.  In the notation above, $B^\pi_j = \{b_1,\ldots,b_j\}$.   Now, let $c_j=\bigvee_{i=1}^j b_i =\vee B^\pi_j\in \R^n$.   (Here, $\vee$ refers to the supremum operation in $\R^n$.) Let $C(B, \pi)$ denote the convex hull of $\{c_1, \ldots, c_m\}$.

\begin{lemma}
\label{simplexlemma}
The points $c_1, \ldots, c_m\in \R^n$ are affinely independent. The geometric simplex $C(B, \pi)$ is contained in $\partial dmA$.
\end{lemma}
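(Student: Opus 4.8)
The plan is to prove the two assertions separately, starting with affine independence. For the sequence $c_1 \leq c_2 \leq \cdots \leq c_m$, observe that each $c_{j+1} = c_j \vee b_{j+1}$ and by Lemma \ref{codimlemma}$(iii)$ applied to $B$, each $b_i$ is the unique element of $B$ supplying a nonempty coordinate-set $J_i = \{\,j \mid \pi_j(b_i) = \pi_j(\vee B)\,\}$, and these $J_i$ partition $\{1,\ldots,n\}$. I would track which coordinates of $c_j$ first reach their final value $\pi_\ell(\vee B)$ at step $j$; the point is that passing from $c_j$ to $c_{j+1}$ strictly raises exactly the coordinates indexed by (a nonempty subset depending on $\pi$ of) $J_{j+1}$, while leaving the already-saturated coordinates fixed. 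Since the $J_i$ are pairwise disjoint, the difference vectors $c_{j+1} - c_j$ have pairwise disjoint supports and are all nonzero, hence linearly independent; therefore $c_1,\ldots,c_m$ are affinely independent. (A small technical point to verify: that $c_{j+1} \neq c_j$, i.e., that adding $b_{j+1}$ genuinely moves the running supremum — this follows because $b_{j+1}$ controls at least one coordinate of $\vee B$ not yet achieved by $b_1,\ldots,b_j$, using disjointness of the $J_i$ and the fact that $b_{j+1} \leq \vee B$.)

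For the second assertion, that $C(B,\pi) \subseteq \partial dmA$, I would use the characterization from Lemma \ref{bdlemma}. Each $c_j = \vee B^\pi_j$ is the supremum of the $A$-neighborly set $B^\pi_j$ (a subset of the neighborly set $B$, hence neighborly), so by the previous lemma $c_j \in \partial dmA$. The goal is then to upgrade this from the vertices to the whole convex hull. Since $dmA$ is a down-set, $\partial dmA$ is characterized by $b + \interior\Orth^n \subseteq \R^n \setminus dmA$ and $b - \interior\Orth^n \subseteq dmA$. A general point of $C(B,\pi)$ is a convex combination $c = \sum_j t_j c_j$ with $t_j \geq 0$, $\sum t_j = 1$. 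For such $c$: the downward condition is automatic, since $c \leq c_m \in dmA$ and $c_m - \Orth^n \subseteq dmA$ gives $c - \interior\Orth^n \subseteq c_m - \interior\Orth^n \subseteq dmA$. The upward condition is the substantive one — I must show no point $c + p$ with $p >> 0$ lies in $dmA$, i.e., $c + p \not\leq x$ for every $x \in mA$.

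The main obstacle is precisely this last step: showing $(c + \interior\Orth^n) \cap (x - \Orth^n) = \emptyset$ for all $x \in mA$. The key observation is that in each coordinate $\ell$, the convex combination gives $\pi_\ell(c) = \sum_j t_j \pi_\ell(c_j)$, and since the $\pi_\ell(c_j)$ are weakly increasing in $j$ up to their common final value $\pi_\ell(c_m) = \pi_\ell(\vee B)$, we have $\pi_\ell(c) \geq \pi_\ell(c_{j_0})$ where $j_0$ is the smallest index with $t_{j_0} > 0$. Thus $c \geq c_{j_0}$, and $c_{j_0} = \vee B^\pi_{j_0}$ is the supremum of a neighborly set, so $c_{j_0} + \interior\Orth^n$ misses every $x - \Orth^n$ (by the argument inside the proof of the previous lemma: if $c_{j_0} + q \leq x$ with $q >> 0$, some element of $B^\pi_{j_0}$ lands in $x - \interior\Orth^n$, contradicting $x \in mA$). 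Since $c + \interior\Orth^n \subseteq c_{j_0} + \interior\Orth^n$, we conclude $c + \interior\Orth^n \subseteq \R^n \setminus dmA$. Combined with the downward condition, Lemma \ref{bdlemma} yields $c \in \partial dmA$, completing the proof.
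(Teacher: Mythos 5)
Your second assertion (containment of $C(B,\pi)$ in $\partial dmA$) is correct and is essentially the paper's argument, except that the paper simply observes $b_1 = c_1 \leq c$ and uses $b_1\in A\subseteq \partial dmA$ directly, rather than passing through $c_{j_0}$; both work.

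The affine-independence argument, however, has a genuine gap. You claim that passing from $c_j$ to $c_{j+1}$ ``strictly raises exactly the coordinates indexed by (a nonempty subset of) $J_{j+1}$,'' so that the difference vectors $c_{j+1}-c_j$ have pairwise disjoint supports. This is false: the set of raised coordinates always contains \emph{all} of $J_{j+1}$, but it may also contain coordinates from later blocks $J_{j+2},\ldots,J_m$ whenever $b_{j+1}$ exceeds the running max $c_j$ in such a coordinate without achieving the final value $\pi_\ell(\vee B)$. Concretely, take $n=m=3$, $b_1=(3,1,1)$, $b_2=(1,3,2)$, $b_3=(2,2,3)$, so $J_1=\{1\}$, $J_2=\{2\}$, $J_3=\{3\}$. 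Then $c_1=(3,1,1)$, $c_2=(3,3,2)$, $c_3=(3,3,3)$, and $c_2-c_1=(0,2,1)$ has support $\{2,3\}\not\subseteq J_2$, which overlaps the support $\{3\}$ of $c_3-c_2$. The conclusion still holds, but because of a triangular (echelon) structure rather than disjointness: $c_{j+1}-c_j$ vanishes on $J_1\cup\cdots\cup J_j$ and is strictly positive on $J_{j+1}$, so inspecting the coordinates in $J_2, J_3, \ldots$ in turn forces each coefficient in a vanishing linear combination to be zero. The paper makes this precise by reordering the coordinates so that $J_j=(n_{j-1},n_j]$ and recording that $\pi_\alpha(c_m-c_j)=0$ iff $\alpha\le n_j$.
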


\begin{proof} Referring to the notation in the lemma, note that using Lemma \ref{codimlemma}, part $i)$, and re-ordering the coordinates if necessary, we may assume that there are integers $0=n_0<n_1<n_2<\cdots<n_m= n$ such that  that  $$\hbox{for $j=1,2,\ldots, m$: $\pi_\alpha (c_m-b_j) = 0\;\Leftrightarrow\;n_{j-1}<\alpha\leq n_j$},$$  and hence  $$\hbox{for $j=1,2,\ldots, m$: $\pi_\alpha (c_m-c_j) = 0\;\Leftrightarrow\;1\leq\alpha\leq n_j$}.$$  This shows affine independence.   Now, suppose $c\in C(B,\pi)$.  Then $c=r_1c_1+\cdots+ r_m c_m$ with $r_i\in [0,1]$ and $r_1+\cdots +r_m=1$.  We have $b_1\leq c$, so $c+\interior\Orth^n\subseteq b_1+\interior\Orth^n\subseteq (\R^n\setminus dmA)$.  On the other hand, $c\leq c_m$, and $c_m\in dmA$, so $c-\interior\Orth^n\subseteq dmA$.\end{proof}

\begin{lemma} 
\label{uniquenesslemma} 
Suppose $B,B'\in N_{m-1}(A)$.  Let $\pi$ and $\pi'$ be total orderings of $B$ and $B'$, respectively.  If $C(B,\pi)=C(B',\pi')$, then $B=B'$ and $\pi=\pi'$.\end{lemma}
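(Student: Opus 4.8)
The plan is to reconstruct the unordered set $B$ and the ordering $\pi$ from the geometric simplex $C(B,\pi)$ alone, using the vertices of $C(B,\pi)$ and the containment relations among them. First I would observe that the vertex set of $C(B,\pi)$ is exactly $\{c_1,\dots,c_m\}$, since these points are affinely independent by Lemma~\ref{simplexlemma}; so from the geometric simplex we recover the $m$-element set $\{c_1,\dots,c_m\}$. Next, note that $c_1 \le c_2 \le \cdots \le c_m$ by construction (each $c_{j+1} = c_j \vee b_{j+1}$), and moreover these inequalities are \emph{strict} in the sense that $c_j \ne c_{j+1}$: indeed $c_{j+1} = \vee B^\pi_{j+1}$ and $c_j = \vee B^\pi_j$, and if they were equal then by Lemma~\ref{codimlemma}(iv) applied to the neighborly sets $B^\pi_j$ and $B^\pi_{j+1}$ we would get $B^\pi_j = B^\pi_{j+1}$, contradicting that $b_{j+1}\notin B^\pi_j$. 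Hence $c_1 < c_2 < \cdots < c_m$ is a strictly increasing chain in $\R^n$ under $\le$. Since a finite subset of a poset carries at most one structure of a strictly increasing chain, the ordering $\pi$ is determined: $c_j$ is the $j$-th smallest element of the vertex set, $c_j = \vee B^\pi_j$, and therefore $b_j$ is determined as well, since $b_1 = c_1$ and, for $j>1$, $b_j$ is the unique element of $B$ lying in $B^\pi_j\setminus B^\pi_{j-1}$.

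To make the last step precise I would recover $b_j$ intrinsically as follows. We have $B^\pi_j = \{b_1,\dots,b_j\}$ and $\vee B^\pi_j = c_j$, so $B^\pi_j = A\cap\bigl(c_j - \Orth^n\bigr)$ by Lemma~\ref{codimlemma}(ii) (the only elements of $A$ in the bonnet over $B^\pi_j$ are the elements of $B^\pi_j$ itself, and the bonnet over $B^\pi_j$ is $c_j-\Orth^n$ since $B^\pi_j$ has an upper bound). Thus each set $B^\pi_j$ is determined by $c_j$, hence by $C(B,\pi)$; and then $b_j = \vee\bigl(A\cap(c_j-\Orth^n)\bigr)$ for each $j$, while $B = B^\pi_m = A\cap(c_m-\Orth^n)$. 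Consequently both $B$ and the ordering $\pi$ are functions of the geometric object $C(B,\pi)$, which is exactly the claim: if $C(B,\pi)=C(B',\pi')$, then the two simplices have the same vertex set, the same increasing chain $c_1<\cdots<c_m$, the same sets $A\cap(c_j-\Orth^n)$, and hence $B=B'$ and $\pi=\pi'$.

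One technical point to verify carefully is that $C(B,\pi)=C(B',\pi')$ as subsets of $\R^n$ forces $m=m'$ and forces the two vertex sets to coincide; this is standard (the vertices of a geometric simplex are its extreme points, and affine independence of $\{c_1,\dots,c_m\}$ from Lemma~\ref{simplexlemma} guarantees $C(B,\pi)$ is an honest $(m-1)$-simplex with exactly those extreme points), but it should be stated. The main obstacle, such as it is, is the strictness claim $c_j\ne c_{j+1}$: everything downstream collapses without it, since otherwise the chain $(c_1,\dots,c_m)$ could have repeats and would not determine the ordering. Fortunately this is precisely where the genericity hypothesis enters, through Lemma~\ref{codimlemma}(iv), so no new work is needed. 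The rest is bookkeeping with bonnets and Lemma~\ref{codimlemma}(ii).
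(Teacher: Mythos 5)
Your argument is correct and takes essentially the same route as the paper: both hinge on Lemma~\ref{codimlemma}(iv) to conclude that the chain $c_1,\ldots,c_m$ and the sets $B^\pi_j$ are recoverable from the geometric simplex $C(B,\pi)$. One minor slip: in the ``technical step'' paragraph you wrote $b_j=\vee\bigl(A\cap(c_j-\Orth^n)\bigr)$, which in fact equals $c_j$ rather than $b_j$; the correct recovery of $b_j$ as the unique element of $B^\pi_j\setminus B^\pi_{j-1}$, which you had stated just before, is what is needed and suffices.
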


\begin{proof} Let $c_1,\ldots,c_m$ and $c'_1, \ldots, c_m$ be constructed as in Lemma \ref{simplexlemma}.  By Lemma \ref{codimlemma}, part $iv)$, $c_i = c'_{j}$ if and only if $i=j$ and $(b'_1, \ldots,b'_i)$ is a permutation of $(b_1, \ldots, b_i)$.\end{proof}

\begin{definition} Let $\cd \Nb(A)$ denote the set consisting of the geometric simplices $C(B, \pi)$ for $B\in \Nb(A)$ and $\pi$ and ordering of $B$, as well as all the subsimplices of the $C(B, \pi)$. \end{definition} 

The union of the simplices in $\cd \Nb(A)$ is contained in $\partial dmA$.  Now, Lemmas \ref{simplexlemma} and \ref{uniquenesslemma} show that 
$$\sd \Nb(A)\ni (\sigma, \pi)\;\leftrightarrow C(\sigma, \pi)\in \cd \Nb(A)$$
is a bijection.  The following is an immediate consequence: 

\begin{theorem}  As an abstract simplicial complex on the vertex set $\{\,\vee B\mid B\in \Nb(A)\,\}$, $\cd \Nb(A)$ is equivalent to $\sd \Nb(A)$.  Thus, $\cd \Nb(A)$ is a geometric realization of $\sd \Nb(A)$ contained in $\partial dmA$.\end{theorem}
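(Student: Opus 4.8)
The plan is to establish the stated equivalence first at the level of vertices, then on simplices, and finally to read off the geometric statement from Lemmas~\ref{simplexlemma} and~\ref{uniquenesslemma}. Write $V:=\{\,\vee\sigma\mid\sigma\in\Nb(A),\ \sigma\neq\emptyset\,\}$ for the candidate vertex set. First I would check that $\dvee\sigma\mapsto\vee\sigma$ is a well-defined bijection from the vertex set of $\sd\Nb(A)$ onto $V$. Surjectivity is clear, since every nonempty $\sigma\in\Nb(A)$ arises as $B^\pi_{|\sigma|}$ with $B=\sigma$, so $\vee\sigma$ is one of the points $c_j=\vee B^\pi_j$ used to build $\cd\Nb(A)$, and conversely each such $c_j$ is the supremum of the simplex $B^\pi_j\in\Nb(A)$. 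Injectivity is exactly Lemma~\ref{codimlemma}$(iv)$: if $\sigma,\sigma'\in\Nb(A)$ satisfy $\vee\sigma=\vee\sigma'$, then, being $A$-neighborly subsets of $A$ with the same supremum, they coincide.

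Next I would show that a set of distinct vertices $\{\dvee\sigma_1,\dots,\dvee\sigma_k\}$ spans a simplex of $\sd\Nb(A)$ if and only if $\{\vee\sigma_1,\dots,\vee\sigma_k\}$ is the vertex set of a simplex of $\cd\Nb(A)$. By the construction of the abstract barycentric subdivision, the left-hand condition holds precisely when, after reindexing, $\sigma_1,\dots,\sigma_k$ form a chain under inclusion; and every such chain is realized by taking $B=\sigma_k$ and $\pi$ an ordering of $B$ that lists the elements of $\sigma_1$ first, then those of $\sigma_2\setminus\sigma_1$, and so on, so that $\sigma_i=B^\pi_{|\sigma_i|}$. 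For the ``only if'' direction, given such a chain each $\vee\sigma_i$ is a vertex of $C(\sigma_k,\pi)$, hence $\{\vee\sigma_1,\dots,\vee\sigma_k\}$ spans a subsimplex of $C(\sigma_k,\pi)$, which lies in $\cd\Nb(A)$ by definition. For the ``if'' direction, suppose $\{\vee\sigma_1,\dots,\vee\sigma_k\}$ is the vertex set of a subsimplex of some $C(B,\pi)$; then, using vertex-injectivity to keep the $\vee\sigma_i$ distinct, each $\vee\sigma_i$ equals a distinct $c_{j_i}=\vee B^\pi_{j_i}$, and Lemma~\ref{codimlemma}$(iv)$ applied to the $A$-neighborly sets $\sigma_i$ and $B^\pi_{j_i}$ forces $\sigma_i=B^\pi_{j_i}$. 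Since the $B^\pi_j$ are totally ordered by inclusion, the $\sigma_i$ form a chain, so $\{\dvee\sigma_1,\dots,\dvee\sigma_k\}$ is a simplex of $\sd\Nb(A)$.

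Combining these two steps, the vertex bijection $\dvee\sigma\leftrightarrow\vee\sigma$ carries the simplices of $\sd\Nb(A)$ bijectively onto the vertex sets of the geometric simplices in $\cd\Nb(A)$; that is, $\cd\Nb(A)$, viewed as an abstract complex on $V$, is isomorphic to $\sd\Nb(A)$. Lemma~\ref{simplexlemma} then supplies the geometric content: each $C(B,\pi)$ is an honest (affinely independent) geometric simplex contained in $\partial dmA$, so the union of $\cd\Nb(A)$ lies in $\partial dmA$; and Lemma~\ref{uniquenesslemma} upgrades the vertex bijection to the claimed bijection $(\sigma,\pi)\leftrightarrow C(\sigma,\pi)$. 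Together these give the theorem.

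The step I expect to require the most care is not the combinatorics above but the verification that $\cd\Nb(A)$ is an honestly \emph{embedded} geometric simplicial complex, i.e.\ that any two of the simplices $C(B,\pi)$, $C(B',\pi')$ meet only along a common face; affine independence of each piece together with the abstract isomorphism with $\sd\Nb(A)$ does not by itself force this. I would handle it using the coordinate normalization from the proof of Lemma~\ref{simplexlemma}: the ordered vertices satisfy $c_1\le c_2\le\cdots\le c_m$, with $c_j$ agreeing with the top vertex $c_m$ on a prescribed initial block of coordinates and strictly smaller on the remaining ones. For a point $x$ in the intersection, comparing the coordinates of $x$ against those of $\vee B$ and of $\vee B'$ forces certain barycentric coordinates of $x$ to vanish, which exhibits $x$ in the convex hull of the vertices common to the two simplices. (If ``geometric realization'' is read in the weaker sense of a family of geometric simplices carrying the abstract structure of $\sd\Nb(A)$, this last point is unnecessary and the theorem is indeed immediate from Lemmas~\ref{simplexlemma} and~\ref{uniquenesslemma}.)
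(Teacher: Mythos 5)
Your proposal follows the same route as the paper: the combinatorial isomorphism $\sd\Nb(A)\cong\cd\Nb(A)$ is read off from the vertex/simplex bijection $(\sigma,\pi)\leftrightarrow C(\sigma,\pi)$ supplied by Lemmas~\ref{simplexlemma} and~\ref{uniquenesslemma}, with Lemma~\ref{codimlemma}$(iv)$ furnishing vertex injectivity; the paper simply declares this an ``immediate consequence'' of those lemmas, and you have filled in the routine details correctly. The one place you go beyond the paper is your final paragraph: you are right that the two lemmas, on their own, give only that each $C(B,\pi)$ is an affinely independent simplex in $\partial dmA$ and that the assignment $(B,\pi)\mapsto C(B,\pi)$ is injective, which does not by itself rule out two simplices of $\cd\Nb(A)$ overlapping in something other than a common face. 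The paper never addresses this point explicitly; it either reads ``geometric realization'' in the weak sense you mention, or it leaves the non-crossing property implicit (presumably via the observation, made just after the theorem, that projection parallel to a vector in $\interior\Orth^n$ is a bijection of $\partial dmA$ onto a hyperplane). Your sketch of how to close this, using the staircase coordinate structure $c_1\le\cdots\le c_m$ from the proof of Lemma~\ref{simplexlemma} to force barycentric coordinates to vanish on points of the intersection, is a reasonable plan; alternatively one can use the projection bijection onto a hyperplane $H$, which reduces the question to non-crossing of affine simplices in $H$ and is closer in spirit to what the paper seems to have in mind.
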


Let $D$ be a proper down-set in $\R^n$ (i.e., $D\not=\emptyset$ and $D\not= \R^n$) with topological boundary $\partial D$.  Let $\ell$ be any line parallel to a vector $p\in \interior\Orth^n$.  Then $\ell$ contains points of $D$ and points not in $D$ as well, since $\ell$ meets $x+\Orth^n$ and $y-\Orth^n$ for any $x,y\in\R^n$.  Moreover, $\ell$ meets $\partial D$ in a unique point, $\bigvee(\ell\cap D)$.  Thus, if $H$ is a hyperplane of dimension $n-1$ perpendicular to $p$,  then projection parallel to $p$ gives a bijection of $\partial D$ onto $H$. Evidently, $Y$ is open in  $\partial D$ if and only if the projection of $Y$ is open in $H$.   Under this projection, each $d$-simplex of $\cd \Nb(A)$ is taken to a $d$-simplex in $H$, so we can see that $\cd \Nb(A)$ is PL equivalent to a (not necessarily compact) PL subset of $\R^{n-1}$.   

\begin{example}What hypotheses on $A$ are needed to assure that $\cd \Nb(A)$ contractible?  The following example shows that it is not adequate to assume that $A$ is a discrete, generic antichain. Consider concentric circles about $(0,0,0)$ of radii $1+1/i$ ($i=1,2,\ldots$), all lying in the plane $H$ defined by $x+y+z=0$.  On the $i^{th}$ circle, choose at least $i$ points spaced  evenly up to a very small error and placed so that no new point lies on any of the lines $x=k$, $y=k$ or $z=k$ ($k$ any constant) passing through any of the previously chosen  points on this or any larger circle.  Let $A$ be the set of all such points.  Three points of $A$ are $A$-neighborly if they lie on the boundary of an $A$-free triangle with sides parallel to the lines $x=0$, $y=0$ and $z=0$.  But a triangle with vertices from $A$ and with sides parallel to these lines is not $A$-free if one of the edges meets the closed unit disk in $H$ about $(0,0,0)$.  Thus, if we project $\cd \Nb(A)$ onto $H$, the image covers an annular region outside the closed unit disk, but it omits the disk itself.  This example can be generalized.  Let $H$ be the hyperplane perpendicular to $(1,1,\ldots,1)$ in $\R^n$, and let $U$ be any open subset of $H$.  Let $A$ be a discrete set of points in $U$, such that every open ball about any boundary point of $U$ contains a point of $A$.  (With some care, we may choose the points of $A$  so that it is generic.)  If $B$ is an $A$-neighborly subset of $A$ then, $H\cap \vee B-\Orth^n$ must be contained in $U$, because any polygonal subset of $H$ with non-empty interior that contains a point in $U$ and a point not in $U$ must have points of $A$ in its interior.  \end{example}

\begin{remark} Note that if the points of $A$ all lie on a line in a hyperplane $H$ perpendicular to some vector in $\interior\Orth^n$, $n\geq 3$, then $\cd \Nb(A)$ is 1-dimensional, and its projection onto $H$ is certainly not all of $H$.  Can we find a closed, discrete, generic antichain $A$ such that $\cd \Nb(A)$ is one-dimensional but does not lie on a line?  Can we arrange for it to be a tree with vertices of valence greater than 2? \end{remark}

\section{Contractibility of $\Nb(A)_{\leq b}$}

\begin{definition}  For $b\in \R^n$, let $\Nb(A)_{\leq b}:=\{\,B\in \Nb(A)\mid \vee B\leq b\,\}$ and $\Nb(A)_{< b}:=\{\,B\in \Nb(A)\mid \vee B< b\,\}$.\end{definition} 

Note that $\Nb(A)_{\leq b} = \Nb(A_{\leq b})$ and $\Nb(A)_{< b} = \Nb(A_{< b})$, since the elements of $\Nb(A)$ are finite subsets of $A$.   Applications of $\Nb(A)$ to minimal free resolutions depend critically on the fact that if $A$ is finite, then $\Nb(A)$ is contractible.  For more on this, see \cite[Proposition 4.5]{MS}.

Two proofs of contractibility have appeared in the literature.  The first, which originates in \cite{BHS}, uses the exponential map.  Consider the family of functions, $E_t:\R^n\to \R^n$ parametrized by positive real numbers and defined by the condition $\pi_i(E_t(x)) = t^{\pi_i(x)}$. For $X\subseteq \R^n$, let  $\conv X$  denote the convex hull of $X$, and let $P_t(A):=\conv (E_t(A))+\Orth^n$.  If $A$ is finite, then $P_t(A)$ is a polyhedron.  The  same arguments used to prove Proposition 4.14 and Theorem 4.17 of \cite{MS}  show that if $A$ is any finite antichain in $\Orth^n$, then there is $t_0\in \R$ such that if $t>t_0$ then the vertices of $P_t(A)$ are the points $\{E_t(a)\mid a\in A\}$ and  face complex of $P_t(A)$ is independent of $t$.  The {\it hull complex\/} of $A$ is, by definition, the cell complex of bounded faces of $P_t(A)$, for $t>t_0$.  Theorem 6.13 of \cite{MS} shows that when $A$ is generic, $\Nb(A)$ is equivalent to the hull complex of $A$, and Theorem 4.17 then shows that  $\Nb(A)_{\leq b}$ is contractible for any $b\in \Orth^n$.  The proof of 4.17 is based on a lemma from polyhedral topology that says that if $P$ is a polyhedron and $F$ is a face of $P$, then the complex of faces of $P$ disjoint from $F$ is contractible.  The theorem follows from the fact that $\Nb(A)_{\leq b}$ can be identified with the faces of $P_t(A)$ that lie on one side of a hyperplane whose position depends on $b$.   

The second approach, due to Olteanu and Welker \cite{OW}, uses combinatorial poset homotopy.  Section 10 of \cite{B} contains a useful synopsis of this theory.    With any poset $P$ we associate the abstract simplicial complex $\Delta(P)$, whose vertices are the elements of $P$ and whose simplices are the chains in $P$.  We say $P$ has a topological property (such as contractibility) when  $\Delta(P)$ has that property.  For example, if $P$ has a largest or a smallest element $p$, then $P$ is contractible since $\Delta(P)$ is a cone over $p$.  The Quillen Fiber Lemma  \cite[10.5]{B} says that if  $f: Q \to P$ is a poset map such that  $f^{-1} (P_{\leq p})$ is contractible for all $p \in P$, then $Q$ and $P$ are homotopy equivalent.  An antichain $C\subseteq P$ is called a {\it crosscut\/} if (a) every chain in $P$ is contained in a chain that meets $C$ and (b) every bounded subset of $C$ (i.e., set with either an upper or a lower bound in $P$) has either a supremum or infimum in $P$.  If $C$ is a crosscut in $P$, $\Gamma(P,C)$ denotes the simplicial complex consisting of the bounded subsets of $C$.  The Crosscut Theorem \cite[10.8]{B} says that $\Gamma(P,C)$ and $P$ are homotopy equivalent.

The lemmas and propositions below are streamlined (and slightly generalized, since we assume only that $A\subseteq \R^n$) versions of material from \cite{OW}.   {\it In all of them,  $A$ is assumed to be a\/ {\bf finite\/} antichain in $\R^n$.}  We do {\it not\/} assume that $A$ is generic.   Let $$L(A):=\{\,\vee B\mid B\subseteq A\,\}.$$  Note that the elements of $A$ are minimal in $L(A)$.  Let $$P(A):=\{\, b\in L(A)\mid \hbox{there is {\bf no} $a\in A$ with $a<<b$}\,\}.$$  Then $P(A)$ is a down-set in $L(A)$, and $A\subseteq P(A)$.

\begin{lemma}  Let $b\in L(A)\setminus P(A)$.  Then $L(A)_{<b}$ is contractible. \end{lemma}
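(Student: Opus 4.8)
The plan is to collapse $L(A)_{<b}$, via the Quillen Fiber Lemma, onto a subposet with a least element. Since $b\in L(A)\setminus P(A)$, there is some $a\in A$ with $a<<b$; because $a\in A\subseteq L(A)$ and $a<b$, we have $a\in L(A)_{<b}$. I would then define the self-map $\phi\colon L(A)_{<b}\to L(A)_{<b}$ by $\phi(c):=c\vee a$.

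The first thing to check — and essentially the only point where any hypothesis is used — is that $\phi$ really maps $L(A)_{<b}$ into itself. If $c=\vee B$ with $B\subseteq A$, then $c\vee a=\vee(B\cup\{a\})\in L(A)$; and since $c\le b$ and $a\le b$ we get $c\vee a\le b$. It remains to rule out $c\vee a=b$: that equality would force $\max\{\pi_i(c),\pi_i(a)\}=\pi_i(b)$ for every $i$, and since $\pi_i(a)<\pi_i(b)$ for all $i$, this would give $\pi_i(c)=\pi_i(b)$ for all $i$, i.e.\ $c=b$, contradicting $c<b$. Hence $c\vee a<b$ and $\phi$ is well defined. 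This little argument is where the strictness $a<<b$ (equivalently, $b\notin P(A)$) does all the work, and it is the only delicate step.

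With $\phi$ in hand the rest is formal combinatorial poset homotopy. The map $\phi$ is order-preserving, satisfies $\phi(c)\ge c$ and $\phi(c)\ge a$ for every $c$, and is idempotent. Let $R:=\phi(L(A)_{<b})$. Every element of $R$ is $\ge a$ and $a=\phi(a)\in R$, so $a$ is the least element of $R$; hence $\Delta(R)$ is a cone with apex $a$ and is contractible. I would then apply the Quillen Fiber Lemma \cite[10.5]{B} to $\phi\colon L(A)_{<b}\to R$: for $r\in R$ we have $a\le r$, so $\phi(c)\le r\iff c\vee a\le r\iff c\le r$, whence $\phi^{-1}(R_{\le r})=(L(A)_{<b})_{\le r}$, a poset with greatest element $r$, hence contractible. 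Therefore $L(A)_{<b}$ is homotopy equivalent to $R$, and $R$ is contractible, which proves the lemma.

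(Alternatively, one can bypass the fiber lemma and use the order-homotopy principle from the same section of \cite{B}: the comparisons $\mathrm{id}\le\phi$ and $\phi\ge\mathrm{const}_a$ between poset maps $L(A)_{<b}\to L(A)_{<b}$ show that $\Delta(\mathrm{id})$, $\Delta(\phi)$ and $\Delta(\mathrm{const}_a)$ are pairwise homotopic, so the identity of $\Delta(L(A)_{<b})$ is null-homotopic.)
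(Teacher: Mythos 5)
Your proposal is correct and takes essentially the same approach as the paper: the central device in both is to pick $a\in A$ with $a<<b$ and collapse $L(A)_{<b}$ via $c\mapsto c\vee a$ onto $[a,b)_{L(A)}$, which has a least element. The paper finishes directly with Bj\"orner's order-homotopy principle \cite[10.12]{B} (the route you sketch in your closing parenthesis), whereas you carry out the Quillen Fiber Lemma version explicitly and also spell out the well-definedness check $c\vee a<b$ that the paper leaves implicit; these are formally interchangeable finishes to the same argument.
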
 

\begin{proof} Pick $a\in A$ such that $a<<b$.  The map $u\mapsto u\vee a:L(A)_{<b} \to [a, b)=[a, b)_{L(A)}$ preserves order and and satisfies $u\leq u\vee a$ for all $u$, so $L(A)_{<b}$ and $[a,b)$ are homotopy equivalent by \cite[10.12]{B}.  Since $[a, b)$ has a smallest element, it is contractible.  \end{proof}

\begin{lemma}   Let $P$ be a poset with maximal element $m$ such that $P_{<m}$ is contractible.  Then $P$ and $P \setminus \{m\}$ are homotopy equivalent.  \end{lemma}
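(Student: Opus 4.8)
The plan is to apply the Quillen Fiber Lemma \cite[10.5]{B} to the inclusion poset map $\iota\colon P\setminus\{m\}\hookrightarrow P$. By that lemma, to conclude that $P\setminus\{m\}$ and $P$ are homotopy equivalent it suffices to verify that the fiber $\iota^{-1}(P_{\leq p})=(P\setminus\{m\})_{\leq p}$ is contractible for every $p\in P$.

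There are two cases. If $p\neq m$, then $m\not\leq p$, since $m\leq p$ together with the maximality of $m$ would force $m=p$. Hence $m\notin P_{\leq p}$, so $(P\setminus\{m\})_{\leq p}=P_{\leq p}$, and this poset has a greatest element, namely $p$; therefore $\Delta(P_{\leq p})$ is a cone over $p$ and is contractible. If $p=m$, then $(P\setminus\{m\})_{\leq m}=P_{<m}$, which is contractible by hypothesis. In either case the fiber is (nonempty and) contractible, so the Quillen Fiber Lemma gives the desired homotopy equivalence.

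I do not expect any real obstacle here; the only point requiring attention is to use that $m$ is \emph{maximal} rather than a greatest element, so that the elements of $P$ incomparable to $m$ are unaffected, and to keep straight the distinction between $P_{\leq p}$ and $P_{<m}$. If one prefers to avoid the Quillen Fiber Lemma, the same conclusion follows from a gluing argument: since $m$ is maximal, every chain of $P$ that contains $m$ has $m$ as its top element, so $\Delta(P)=\Delta(P\setminus\{m\})\cup\bigl(\{m\}\ast\Delta(P_{<m})\bigr)$ is the union of $\Delta(P\setminus\{m\})$ with a cone, glued along the contractible subcomplex $\Delta(P_{<m})$; attaching a contractible space along a contractible subcomplex does not change the homotopy type.
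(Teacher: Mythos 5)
Your proof is correct and matches the paper's: both apply the Quillen Fiber Lemma to the inclusion $P\setminus\{m\}\hookrightarrow P$, checking that the fiber over $p\neq m$ is $P_{\leq p}$ (a cone on its greatest element $p$) and the fiber over $m$ is $P_{<m}$, contractible by hypothesis. You additionally spell out the small but necessary point that maximality of $m$ forces $m\notin P_{\leq p}$ when $p\neq m$, which the paper leaves implicit, and your optional gluing argument is a fine alternative but not what the paper does.
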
  

\begin{proof}   We apply the Quillen Fiber Lemma. Let $Q := P \setminus \{m\}$.  Consider the natural inclusion $i: Q \subset P$ (so $i^{-1}(X)=X\cap Q$). Let $p\in P$.  If $p\not=m$, $i^{-1}(P_{\leq p})= P_{\leq p}$, which is contractible since it has a largest element. On the other hand, $i^{-1}(P_{\leq m}) = Q_{\leq m}= P_{< m}$ is contractible by assumption.  The lemma follows.\end{proof}

\begin{proposition}  $P(A)$ is contractible.\end{proposition}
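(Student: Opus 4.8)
The plan is to build $P(A)$ up from $A$ by adjoining the elements of $L(A)\setminus P(A)$ one at a time in a suitable order, removing them from $L(A)$, and applying the previous two lemmas at each stage. More precisely, I would first observe that $L(A)$ is contractible because it has a largest element, namely $\vee A$. (If $\vee A\notin P(A)$ this is fine; if $\vee A\in P(A)$, then every $b\in L(A)$ satisfies: there is no $a\in A$ with $a<<b\le \vee A$... actually in that degenerate case $L(A)=P(A)$ and there is nothing to prove, so assume $L(A)\setminus P(A)\neq\emptyset$.) Now enumerate the finitely many elements of $L(A)\setminus P(A)$ as $b_1,b_2,\ldots,b_N$ in an order that is a linear extension of the partial order on $L(A)$ \emph{reversed} — that is, so that $b_k$ is maximal in $\{b_k,b_{k+1},\ldots,b_N\}\cup P(A)$ for each $k$. (Such an order exists because $P(A)$ is a down-set, so every element of $L(A)\setminus P(A)$ has all its proper successors again in $L(A)\setminus P(A)$, and a finite poset always admits a linear extension.)

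Set $P_0:=L(A)$ and $P_k:=L(A)\setminus\{b_1,\ldots,b_k\}$, so that $P_N=P(A)$. The key claim is that at each step $k$, $b_k$ is a maximal element of $P_{k-1}$ and $(P_{k-1})_{<b_k}$ is contractible, so that Lemma~6.3 (the Quillen-fiber lemma applied to the inclusion $P_{k-1}\setminus\{b_k\}=P_k \hookrightarrow P_{k-1}$) gives a homotopy equivalence $P_k\simeq P_{k-1}$. Chaining these together yields $P(A)=P_N\simeq P_{N-1}\simeq\cdots\simeq P_0=L(A)$, which is contractible; hence $P(A)$ is contractible.

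It remains to verify the two conditions at step $k$. Maximality: since $b_k$ is maximal in $\{b_k,\ldots,b_N\}\cup P(A)$, and $P_{k-1}=\{b_k,\ldots,b_N\}\cup P(A)$, $b_k$ is maximal in $P_{k-1}$. Contractibility of $(P_{k-1})_{<b_k}$: I claim $(P_{k-1})_{<b_k}=L(A)_{<b_k}$. Indeed, the elements removed so far are $b_1,\ldots,b_{k-1}$, each of which is \emph{not} $<b_k$ — because in our chosen order $b_j$ for $j<k$ is not below $b_k$ in the partial order (a linear extension of the reversed order places smaller-in-$L(A)$ elements later, so anything strictly below $b_k$ comes after it and has not yet been removed). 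Hence removing $b_1,\ldots,b_{k-1}$ does not disturb the down-set $L(A)_{<b_k}$. Since $b_k\in L(A)\setminus P(A)$, Lemma~6.2 says $L(A)_{<b_k}$ is contractible, and we are done.

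I expect the main obstacle to be setting up the enumeration order carefully enough that the bookkeeping in the last paragraph is airtight — specifically, making sure that when $b_k$ is removed, none of the strict predecessors of $b_k$ in $L(A)$ have been removed yet (so that $(P_{k-1})_{<b_k}$ is genuinely all of $L(A)_{<b_k}$), while the strict successors of $b_k$ have all been removed (so that $b_k$ is maximal in what remains). Both constraints are satisfied by taking any linear extension of the opposite order on $L(A)\setminus P(A)$, but one has to state this precisely. Everything else is a direct appeal to Lemmas~6.2 and~6.3.
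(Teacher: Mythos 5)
Your argument is essentially the same as the paper's: both start from the contractible poset $L(A)$ (which has top element $\vee A$), peel off maximal elements of $L(A)\setminus P(A)$ one at a time, and apply the two preceding lemmas at each step. Your fixed linear extension of the reversed order just makes explicit the "greedy" choice of maximal element the paper uses, and your observation that $(P_{k-1})_{<b_k}=L(A)_{<b_k}$ spells out the bookkeeping the paper leaves implicit when citing Lemma 6.2; both are correct.
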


\begin{proof} $L(A)$ has a unique maximal element $\vee A$, so it is contractible.   Let $X^{(0)}$ = $L(A)$.  Now we construct $X^{(i)}$, $i=1,2,\ldots$ by induction. If $X^{(i)}$ has been defined and is not equal to $P(A)$, then $X^{(i)}$ has at least one maximal element  that is not in $P(A)$.  Let $m_i$ be one such element and  define $X^{(i+1)}:= X^{(i)}\setminus\{m_i\}$.  By Lemmas 1 and 2, all the $X^{(i)}$ are all contractible.    Since $L(A)$ is finite, for some $i$,  $X^{(i)}=P(A)$.\end{proof}

\begin{proposition}  $A$ is a crosscut in $P(A)$, and $\Gamma(P(A),A) = \Nb(A)$.  Thus, $\Nb(A)$  is contractible.\end{proposition}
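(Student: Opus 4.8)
The plan is to verify the two crosscut axioms for $A$ inside $P(A)$, then to establish the set-theoretic identity $\Gamma(P(A),A)=\Nb(A)$, and finally to combine the Crosscut Theorem with the contractibility of $P(A)$ proved above. As preliminaries I would record that $A$ is an antichain in $\R^n$, hence in the sub-poset $P(A)$, that $A\subseteq P(A)$, and that since every element of $L(A)$ has the form $\vee B$ with $\emptyset\ne B\subseteq A$, every element of $P(A)$ dominates some $a\in A$; because $P(A)$ is a down-set in $L(A)$, that $a$ again lies in $P(A)$. Thus $A$ is exactly the set of minimal elements of $P(A)$, and crosscut axiom (a) follows: any chain of $P(A)$ can be enlarged by prepending an element of $A$ below its least element, producing a chain that meets $A$.

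The step requiring the most care is axiom (b): every $X\subseteq A$ that is bounded in $P(A)$ has a supremum or an infimum in $P(A)$. Singletons, and by convention the empty set, are trivial. For $|X|\ge 2$ I would first show $X$ has no lower bound in $P(A)$: if $\ell=\vee B\in P(A)$ with $\emptyset\ne B\subseteq A$ were one, then picking $b\in B$ gives $b\le\ell\le a$ for every $a\in X$, forcing $b=a$ for all $a\in X$ since $A$ is an antichain, which is impossible. So a bounded such $X$ has an upper bound $u\in P(A)$; then $\vee X$ exists in $\R^n$, lies in $L(A)$, satisfies $\vee X\le u$, and hence lies in $P(A)$ because $P(A)$ is a down-set in $L(A)$. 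Therefore $\vee X$ is the supremum of $X$ in $P(A)$, and $A$ is a crosscut in $P(A)$.

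Next I would prove $\Gamma(P(A),A)=\Nb(A)$. Since $A$ is finite, the finiteness in the definition of $\Nb(A)$ is automatic. Rerunning the previous argument shows that $B\subseteq A$ is bounded in $P(A)$ if and only if $\vee B$ exists and lies in $P(A)$: for $|B|\le 1$ this is immediate since $\vee\{a\}=a\in A\subseteq P(A)$, and $\emptyset$ lies in both $\Gamma(P(A),A)$ and $\Nb(A)$ by convention. By the definition of $P(A)$, the condition $\vee B\in P(A)$ says exactly that $\vee B-\interior\Orth^n$ contains no element of $A$, which is precisely the defining condition for $B$ to be $A$-neighborly. Hence the bounded subsets of $A$ in $P(A)$ are exactly the $A$-neighborly subsets of $A$, i.e., $\Gamma(P(A),A)=\Nb(A)$.

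Finally, the Crosscut Theorem \cite[10.8]{B} gives that $\Gamma(P(A),A)$ is homotopy equivalent to $P(A)$, which is contractible by the preceding proposition; therefore $\Nb(A)=\Gamma(P(A),A)$ is contractible. The only genuinely non-formal inputs are the antichain hypothesis on $A$ (used to rule out lower bounds of multi-element subsets, hence to force the relevant joins to exist) and the fact that $P(A)$ is a down-set in the join-semilattice $L(A)$ (used to keep those joins inside $P(A)$); the rest is unwinding the definitions of $\Gamma$, $P(A)$, and $A$-neighborliness.
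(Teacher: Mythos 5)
Your proof is correct and follows the same route as the paper: verify the crosscut axioms using the structure of $P(A)$ as a down-set in $L(A)$, identify $\Gamma(P(A),A)$ with $\Nb(A)$ by unwinding the definition of $P(A)$, and invoke the Crosscut Theorem together with the contractibility of $P(A)$. You have spelled out some details the paper leaves implicit (e.g.\ that a multi-element subset of $A$ has no lower bound in $P(A)$, and that a finite $B\subseteq A$ with an upper bound in $P(A)$ has $\vee B\in P(A)$ precisely because $P(A)$ is a down-set in $L(A)$), but the argument is the same one.
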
 

\begin{proof} The elements of $A$ are the minimal elements of $P(A)$, so condition (a) for a crosscut is satisfied.  By definition of $P(A)$, the subsets of $A$ that are bounded in $P(A)$ are the $A$-neighborly subsets of $A$  and $P(A)$ consists of the suprema of such sets, so condition (b) is satisfied.  This also shows that $\Gamma(P(A),A) = \Nb(A)$.  The second statement now follows immediately from the Crosscut Theorem. \end{proof}

\begin{corollary}  If $A$ is a (possibly infinite) antichain in $\R^n$,  $b\in \R^n$, and $A_{\leq b}$ (respectively $A_{<b}$) is finite, then $\Nb(A)_{\leq b}$ (respectively $\Nb(A)_{<b}$) is contractible. \end{corollary}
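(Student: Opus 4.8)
The plan is to reduce the statement to the case of a \emph{finite} antichain, which was just settled in the last Proposition above. Set $A':=A_{\leq b}$ (respectively $A':=A_{<b}$). Since a subset of an antichain is an antichain, $A'$ is an antichain, and it is finite by hypothesis, so $\Nb(A')$ is contractible. Hence the whole content of the Corollary is the identity $\Nb(A)_{\leq b}=\Nb(A_{\leq b})$ (respectively $\Nb(A)_{<b}=\Nb(A_{<b})$) noted just before the statement, which I would verify directly from the definition of $A$-neighborliness.

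Take the case of $\leq b$. The two simplicial complexes have the same vertex set $A_{\leq b}$ (each singleton from an antichain is neighborly), so it suffices to check that a finite $B\subseteq A$ lies in one iff it lies in the other. If $B\in\Nb(A)$ and $\vee B\leq b$, then every element of $B$ is $\leq\vee B\leq b$, so $B\subseteq A_{\leq b}$; and the interior of the bonnet over $B$, which by hypothesis contains no point of $A$, a fortiori contains no point of $A_{\leq b}$, so $B$ is $A_{\leq b}$-neighborly. Conversely, suppose $B\in\Nb(A_{\leq b})$, so $B\subseteq A_{\leq b}$ and $(\vee B-\interior\Orth^n)\cap A_{\leq b}=\emptyset$. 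Since $b$ is an upper bound of $B$ we have $\vee B\leq b$; it remains to see that $B$ is $A$-neighborly, not merely $A_{\leq b}$-neighborly, so that $B\in\Nb(A)$. This is where the single real observation of the proof enters: if some $a\in A$ had $a<<\vee B$, then from $a<<\vee B\leq b$ we would get $a\leq b$, i.e.\ $a\in A_{\leq b}$, contradicting $(\vee B-\interior\Orth^n)\cap A_{\leq b}=\emptyset$. So no such $a$ exists, $B$ is $A$-neighborly, and $B\in\Nb(A)_{\leq b}$. Therefore $\Nb(A)_{\leq b}=\Nb(A_{\leq b})$, which is contractible by the first step.

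The case of $\Nb(A)_{<b}$ is handled the same way, with $A_{<b}$ in place of $A_{\leq b}$ and strict inequalities throughout; the only additional point to confirm is that an $A_{<b}$-neighborly subset $B\subseteq A_{<b}$ actually has $\vee B<b$, and not merely $\vee B\leq b$, so that the two complexes agree on the nose.

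I do not expect a genuine obstacle here: the argument is essentially bookkeeping. The one load-bearing step is the implication highlighted above---that a point of $A$ lying in the interior of the bonnet over a set whose supremum is $\leq b$ is automatically $\leq b$, and hence already belongs to the truncation $A_{\leq b}$. Once that is in hand, the reduction to the finite case and the appeal to the last Proposition (which rests on the Crosscut Theorem) finish the proof.
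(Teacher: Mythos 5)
Your argument for the $\leq b$ half is exactly the paper's (implicit) proof: the identity $\Nb(A)_{\leq b}=\Nb(A_{\leq b})$, which the paper records just before the corollary, reduces the claim to the preceding proposition, and the verification you give---in particular, that any $a\in A$ lying in the open bonnet over $B$ satisfies $a<<\vee B\leq b$ and hence already belongs to $A_{\leq b}$---is precisely the observation needed. That half is sound and matches the paper.

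The $<b$ half is a different matter. The step you flag as ``the only additional point to confirm''---that an $A_{<b}$-neighborly $B\subseteq A_{<b}$ has $\vee B<b$---is false, so no bookkeeping will close the gap. Take $n=2$, $A=\{(1,0),(0,1)\}$, $b=(1,1)$. Then $A_{<b}=A$, the pair $B=\{(1,0),(0,1)\}$ is $A_{<b}$-neighborly because neither point is $<<(1,1)$, and yet $\vee B=(1,1)=b$, which is not $<b$. Thus $B\in\Nb(A_{<b})$ while $B\notin\Nb(A)_{<b}$, and the identity $\Nb(A)_{<b}=\Nb(A_{<b})$ fails. In this same example $\Nb(A)_{<b}=\{\emptyset,\{(1,0)\},\{(0,1)\}\}$, whose realization is two disjoint points and so is not contractible---which means the $<b$ assertion of the corollary is itself problematic as literally stated; the issue is latent in the paper's own note preceding the corollary. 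You have correctly isolated the step on which the $<b$ case hinges, but it does not go through.
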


\section{Global Topology of $\Nb(A)$}
We will add to $A\subseteq \R^n$ certain ideal points with infinite coordinates and then extend the definition of $\Nb(A)$ accordingly, allowing a bonnet $b-\Orth^n$ to be defined by a $b$ that has some infinite coordinates, i.e., $b\in [-\infty,+\infty\,]^n$.  This idea comes from \cite{Sc}, where the ideal points are called ``slack vectors,'' and it is used in \cite{BSS} to prove that a complex closely related to $\Nb(A)$ is a triangulation of $\R^{n-1}$ when $A$ is a lattice.  We prove a similar result for any $A$ such that $A^\ast$ is generic.  In \cite{BSS}, the proof is based on the exponential map.  Instead, we use the facts about $\partial dnA^\ast$ that we established in \S 5, above.

\begin{definition} Let $w_i$, $i=1, \ldots,n$ be defined by  $$\pi_j(w_i) =\Big\{\begin{array}{lr}
+\infty,& \text{if }i=j;\\ 
-\infty,& \text{if } i\not=j.\end{array}$$  If $A\subseteq \R^n$, let $A^\ast:=A\cup \{w_1, \ldots, w_n\}$.  \end{definition}

Let $W$ be the combinatorial $(n-1)$-simplex on the vertex set $\{w_1, \ldots, w_n\}$.  We may identify the interior of $|W|$ with the hyperplane $H$ in $\R^n$ that contains the origin and is perpendicular to $\mathbf{1}=(1,1,\ldots,1)$.  In this picture, the boundary of $|W|$ is an $(n-2)$-sphere ``at infinity,'' that compactifies $H$.  The sub-simplices of $W$ are realized as subsets of this $(n-2)$-sphere.

\begin{proposition}\label{prop7}  Let $A\subseteq \R^n$.  Assume that  the projection $\pi_i(A)$ is closed and discrete for each $i=1,2,\ldots,n$, and that $A^\ast$ is a generic antichain in $[-\infty, +\infty]^n$.  Let $b\in \R^n$ and suppose that there is no $a\in A^\ast$  with $a\leq b$.  Then, there is $B\in N_{n-1}(A^\ast)$ such that $b< \vee B$.  \end{proposition}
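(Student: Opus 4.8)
The plan is to slide $b$ upward, as far as possible, to a point $\gamma\in[-\infty,+\infty]^n$ with $\gamma\ge b$ for which $\gamma-\interior\Orth^n$ still contains no point of $A^\ast$, and then to recover the required $n$-element face $B$ from the $n$ facets of the orthant $\gamma-\Orth^n$; the point $\gamma$ will in fact lie on $\partial dmA^\ast$. First I would observe that the hypothesis is equivalent to $A\cap(b-\Orth^n)=\emptyset$: since each $w_i$ has a coordinate equal to $+\infty$ we have $w_i\not\le b$ automatically, so "no $a\in A^\ast$ with $a\le b$'' says no more than "no $a\in A$ with $a\le b$''. In particular $b-\interior\Orth^n$ contains no point of $A^\ast$, so $\{b\}$ is $A^\ast$-neighborly.

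Next I would apply Zorn's Lemma to the poset $\mathcal Y$ of all $y\in[-\infty,+\infty]^n$ with $y\ge b$ such that $\{y\}$ is $A^\ast$-neighborly (equivalently, no point of $A^\ast$ is $<<y$). It is nonempty, containing $b$. For the chain condition: if $\{y^\alpha\}$ is a chain with supremum $\gamma_0$ and some $x\in A^\ast$ satisfied $x<<\gamma_0$, then $x$ has only finite coordinates (a slack vector's $+\infty$-coordinate is never strictly below anything), so for each coordinate $i$, using $\pi_i(\gamma_0)=\sup_\alpha\pi_i(y^\alpha)$, there is $\alpha_i$ with $\pi_i(x)<\pi_i(y^{\alpha_i})$; taking the largest of the finitely many $y^{\alpha_i}$ in the chain gives $x<<y^{\alpha_0}$, contradicting $\{y^{\alpha_0}\}\in\mathcal Y$. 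Hence $\mathcal Y$ has a maximal element $\gamma$, and $\gamma\ge b$ has no coordinate equal to $-\infty$.

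The core step is to show that for each $j\in\{1,\dots,n\}$ there is a point $x_j\in A^\ast\cap(\gamma-\Orth^n)$ lying in the relative interior of the $j$-th facet $\gamma-\relint\Orth^n_{\{j\}}$. If $\gamma_j=+\infty$, take $x_j:=w_j$; this lies in that facet precisely because $\gamma_i>-\infty$ for $i\ne j$. If $\gamma_j<+\infty$, then for each $t>\gamma_j$ the point $\gamma^{(j,t)}$ obtained from $\gamma$ by raising coordinate $j$ to $t$ satisfies $\gamma^{(j,t)}>\gamma\ge b$, so by maximality $\gamma^{(j,t)}\notin\mathcal Y$: there is $z\in A^\ast$ with $z<<\gamma^{(j,t)}$. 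A slack vector cannot serve (its $+\infty$-coordinate is not below the corresponding coordinate of $\gamma^{(j,t)}$), so $z\in A$; and $z\not<<\gamma$ because $\{\gamma\}$ is $A^\ast$-neighborly, so the failure must occur in coordinate $j$, giving $\pi_j(z)\in[\gamma_j,t)$ and $\pi_i(z)<\gamma_i$ for all $i\ne j$. Thus the set $V:=\{\pi_j(z):z\in A,\ \pi_i(z)<\gamma_i\text{ for }i\ne j\}\cap[\gamma_j,\gamma_j+1)$ meets $[\gamma_j,t)$ for every $t\in(\gamma_j,\gamma_j+1)$, so $\inf V=\gamma_j$; but $V\subseteq\pi_j(A)\cap[\gamma_j,\gamma_j+1)$, which is finite since $\pi_j(A)$ is closed and discrete. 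A nonempty finite set attains its infimum, so $\gamma_j\in V$, yielding an $x_j\in A$ with $\pi_j(x_j)=\gamma_j$ and $\pi_i(x_j)<\gamma_i$ for $i\ne j$. I expect this compactness-and-discreteness extraction to be the main technical point; it is exactly where the hypothesis that each $\pi_i(A)$ is closed and discrete is used.

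Finally I would set $B:=\{x_1,\dots,x_n\}$ and finish by bookkeeping. The $x_j$ lie in the relative interiors of pairwise distinct facets of $\gamma-\Orth^n$, hence are pairwise distinct, so $|B|=n$. Since $B\subseteq A^\ast\cap(\gamma-\Orth^n)$ we have $\vee B\le\gamma$, so the interior $\vee B-\interior\Orth^n$ of the bonnet over $B$ lies inside $\gamma-\interior\Orth^n$, which misses $A^\ast$; hence $B$ is $A^\ast$-neighborly and $B\in N_{n-1}(A^\ast)$. Moreover $\vee B=\gamma$: coordinate $j$ of $\vee B$ equals $\gamma_j$, contributed by $x_j$, while every other $x_i$ has $\pi_j(x_i)<\gamma_j$. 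It remains to rule out $\gamma=b$: were that so, every $\gamma_j$ would be finite, so the construction above would produce $x_1,\dots,x_n\in A$ with $\pi_j(x_j)=b_j$ and $\pi_i(x_j)<b_i$ for $i\ne j$, hence $x_1\le b$, contradicting $A\cap(b-\Orth^n)=\emptyset$. Therefore $b<\gamma=\vee B$, which is the assertion. (Notably, the genericity of $A^\ast$ does not appear to be needed for this argument, though it is of course in force.)
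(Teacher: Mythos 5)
Your proof is correct, and it follows a genuinely different route from the paper's. The paper proceeds greedily, one coordinate at a time: starting from $b$, it raises coordinate $1$ until the resulting orthant first picks up a point of $A^\ast$ in the face $\Orth^n_{\{1\}}$ (or reaches $+\infty$, picking up $w_1$), then raises coordinate $2$, and so on; after $n$ steps it has $b_n=\vee\{a_1,\dots,a_n\}$, and it invokes Lemma~\ref{facelemma} (hence genericity) to argue that each facet contains \emph{exactly} one point of $A^\ast$, which keeps the bookkeeping of "new point in each step'' clean. You instead apply Zorn's Lemma to find a point $\gamma\ge b$ maximal subject to $\gamma-\interior\Orth^n$ missing $A^\ast$, then show each facet of $\gamma-\Orth^n$ meets $A^\ast$ in its relative interior, with the closed-discrete hypothesis used to turn "$\inf V=\gamma_j$'' into "$\gamma_j\in V$.'' Both arguments are sound; yours is less constructive (via Zorn) but conceptually cleaner, and it has the genuinely useful side benefit of making explicit that neither genericity of $A^\ast$ nor the antichain hypothesis is actually needed for this proposition --- only the closed-discrete condition on each projection $\pi_i(A)$. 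That is a sharpening worth noting. One small point of care you handled correctly but that deserves emphasis: the Zorn chain argument works because the dimension $n$ is finite, so that any $x\ll\gamma_0$ is witnessed by a single chain element; this is where the argument would break in infinite dimension.
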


\begin{proof} Let $\{e_1, \ldots, e_n\}$ be the standard basis for $\R^n$, i.e., $\pi_i(e_i)=1$ and for $j\not=i$, $\pi_j(e_i)=0$.  Assume that $b$ satisfies the hypotheses of the lemma.  Define $b_1$ as follows:
\begin{enumerate}
\item[$i)$]  If there is $\lambda \in \R_{\geq 0}$ such that $A_{\leq b+\lambda e_1}$ is nonempty, then (because $\pi_1(A)$ is closed and discrete)  there is a smallest such $\lambda$, call it $\lambda_1$.  By assumption on $b$, $\lambda_1>0$.  Let $b_1:=b+\lambda_1 e_1$.   
\item[$ii)$] Otherwise let $b_1$ be defined by $\pi_1(b_1) = +\infty$ and for $i>1$, $\pi_i(b_1) = \pi_i(b)$.  
\end{enumerate}
Because $A_{\leq b}$ is empty and $b_1$ differs from $b$ only the first coordinate,  $A_{\leq b_1}$ is contained in the face $b_1-\Orth^n_{\{1\}}$ of $b_1-\Orth^n$.   By Lemma \ref{codimlemma}, $b_1-\Orth^n_{\{1\}}$ contains {\it only\/} one point of $A^\ast$, call it $a_1$.  Note that $a_1$ might be $w_1$.  Now we continue the process.  Define $b_2$ by increasing the second coordinate of $b_1$  (possibly to $+\infty$) just until $b_2-\Orth^n_{\{2\}}$ contains a point $a_2$ of $A^\ast$ other than $a_1$, possibly $a_2=w_2$.  (Note that $b_1-\Orth^n_{\{2\}}$ might contain  $a_1$.  If so, however, $a_1$ is not in the relative interior of $b_1-\Orth^n_{\{1\}}$, and we then increase the second coordinate of $b_1$ just until we obtain  $b_2$ such that $b_2- \Orth^n_{\{2\}}$ contains a new point of $A^\ast$.  Here, we are using the hypothesis that $\pi_2(A)$ is closed and discrete.)   Continue in this fashion.  At every step, we  properly increase the coordinate that we are adjusting.    When we have gone through all the coordinates, we have  constructed $b_n\geq b$ with the property that each codimension 1 face of $b_n-\Orth^n$ contains {\it exactly\/} one point of $A^\ast$, and there are no points of $A^\ast$ in $b_n-\interior\Orth^n$.  Observe that $b_n$ is the supremum of the points $a_1, \ldots,a_n\in  A^\ast$, and these points lie in the interiors of the codimension 1 faces of $b_n-\Orth^n$.  These points form a maximal $A$-neighborly set $B$, and $b_n-\Orth^n$ is the bonnet over that set.  Note that if $A$ is non-empty, then at least one coordinate of $b_n$ must be finite.  \end{proof}

\begin{remark}  We may modify the construction in the proof by taking the coordinates in some order other than the default order.  In principle, then, there might be as many as $n!$ different bonnets that contain $b$. 
\end{remark}

\begin{theorem} Suppose $A$ satisfies the conditions in Proposition \ref{prop7}.  Then $|\Nb(A*)|\setminus|W|$ is homeomorphic to $\R^{n-1}$.\end{theorem}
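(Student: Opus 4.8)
The plan is to realise $|\Nb(A^\ast)|$ concretely inside the order completion $[-\infty,+\infty]^n$, using the constructions of \S5, and then to project it parallel to $\mathbf 1=(1,\ldots,1)$. Assume $A\neq\emptyset$ (otherwise $\Nb(A^\ast)=W$ and the statement is vacuous). The simplex $W$ is \emph{not} a member of $\Nb(A^\ast)$: the supremum of its bonnet is $(+\infty,\ldots,+\infty)$, and $\{\,x\mid x<<(+\infty,\ldots,+\infty)\,\}$ contains all of $A$. Every \emph{proper} face $F$ of $W$, on the other hand, is $A^\ast$-neighborly, since $\vee F$ then has a coordinate equal to $-\infty$, so that $\vee F-\interior\Orth^n=\emptyset$. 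Thus $\partial W$ is a subcomplex of $\Nb(A^\ast)$; since $|\Nb(A^\ast)|$ and $|W|$ are joined along this common subcomplex (the $(n-2)$-sphere at infinity of \S7), $|\Nb(A^\ast)|\setminus|W|=|\Nb(A^\ast)|\setminus|\partial W|$.

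By the Remark following Lemma \ref{codimlemma} we may work in $[-\infty,+\infty]^n$, which is order- and topologically isomorphic to the cube $Q=[-1,1]^n$, and all of \S5 applies to $A^\ast$ there. By the equivalence of $\sd\Nb(A^\ast)$ with $\cd\Nb(A^\ast)$ proved in \S5, $|\Nb(A^\ast)|$ is homeomorphic to $|\cd\Nb(A^\ast)|:=\bigcup_{B,\pi}C(B,\pi)\subseteq Q$. I would let $\rho\colon Q\to\Sigma$ be projection parallel to $\mathbf 1$, where the shadow $\Sigma$ is an $(n-1)$-dimensional convex polytope, hence a closed $(n-1)$-cell with $\interior\Sigma\cong\R^{n-1}$, and under the identification of $\interior|W|$ with $H$ made in \S7, $|W|$ corresponds to $\Sigma$ and $|\partial W|$ to $\partial\Sigma$. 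It then suffices to prove: (i) $\rho$ carries $|\partial W|$ homeomorphically onto $\partial\Sigma$; and (ii) $\rho$ carries $|\cd\Nb(A^\ast)|\setminus|\partial W|$ homeomorphically onto $\interior\Sigma$. Together these give $|\Nb(A^\ast)|\setminus|W|\cong|\cd\Nb(A^\ast)|\setminus|\partial W|\cong\interior\Sigma\cong\R^{n-1}$.

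The common mechanism is that each $C(B,\pi)$ is \emph{monotone}: by the coordinate analysis in the proof of Lemma \ref{simplexlemma}, after a reordering of coordinates its vertices satisfy $c_1<\cdots<c_m$ and $\pi_\alpha(c_i)=\pi_\alpha(c_m)$ for $\alpha\le n_1$, so $C(B,\pi)$ meets no line parallel to $\mathbf 1$ in more than one point. Since in addition all the $C(B,\pi)$ lie on the staircase boundary of $dmA^\ast$ (Lemmas \ref{bdlemma} and \ref{simplexlemma}), which meets each such line at most once, $\rho$ is injective on $|\cd\Nb(A^\ast)|$; being injective and open there (the projection statement of \S5), $\rho$ carries it homeomorphically onto its image in $\Sigma$. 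For (i): if $F=\{w_{i_1},\ldots,w_{i_k}\}$ is a proper face of $W$, every vertex $c_j=w_{i_1}\vee\cdots\vee w_{i_j}$ lies in the face $\{\,x\mid x_{i_1}=1,\ x_\ell=-1\text{ for }\ell\notin\{i_1,\ldots,i_k\}\,\}$ of $Q$, a silhouette face whose image under $\rho$ lies in $\partial\Sigma$; hence $\rho(|\partial W|)\subseteq\partial\Sigma$, and since $|\partial W|$ and $\partial\Sigma$ are each homeomorphic to $S^{n-2}$, the continuous injection $\rho\colon|\partial W|\to\partial\Sigma$ is onto and a homeomorphism by invariance of domain. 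For (ii) it remains to show surjectivity onto $\interior\Sigma$, and this is where Proposition \ref{prop7} is used: given $s\in\interior\Sigma$, the line $\rho^{-1}(s)$ meets $\R^n$; choose a point $b$ on it so far down that no $a\in A^\ast$ satisfies $a\le b$. By Proposition \ref{prop7} and the Remark following it, for each ordering of the coordinates there is a maximal $A^\ast$-neighborly set $B$ with $b<\vee B$; these stabilise as $b$ descends the line, and the portion of the bonnet over such a $B$ that is visible from above — namely the union of the simplices $C(B',\pi')$ with $B'\subseteq B$ — meets $\rho^{-1}(s)$. Since a continuous open bijection is a homeomorphism, (ii) follows.

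I expect the surjectivity step in (ii) to be the main obstacle: Proposition \ref{prop7} supplies a maximal neighborly bonnet lying above any prescribed line, but one must verify that, as the ordering of coordinates is varied, the visible portions of these bonnets agree exactly with the pieces cut out by the simplices $C(B',\pi')$, so that the triangulation covers $\interior\Sigma$ without gaps. The attendant bookkeeping at infinity — confirming $|\Nb(A^\ast)|\cap|W|=|\partial W|$ and matching $\rho(|\partial W|)$ with $\partial\Sigma$, which amounts to reconciling the combinatorial and topological boundaries of $dmA^\ast$ at points with infinite coordinates — is the other delicate ingredient.
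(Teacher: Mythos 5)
Your argument follows the same route as the paper's: realize $|\Nb(A^\ast)|$ as the PL complex $\cd\Nb(A^\ast)$ in $[-\infty,+\infty]^n$ (via the $\sd\leftrightarrow\cd$ equivalence of \S5), project parallel to $\mathbf 1$ using the injectivity/openness established in \S5, and invoke Proposition~\ref{prop7} to show every line parallel to $\mathbf 1$ meets the complex, so that the finite part projects onto all of $H\cong\R^{n-1}$. The paper's own proof is a two-sentence gloss of exactly this; you have filled in the boundary bookkeeping (that $W\notin\Nb(A^\ast)$ but $\partial W\subset\Nb(A^\ast)$, and that $\rho(|\partial W|)=\partial\Sigma$). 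The concern you raise at the end is genuine: Proposition~\ref{prop7} produces a maximal $B$ with $b<\vee B$, but neither you nor the paper actually verifies that the point where $\ell$ exits the bonnet over $B$ lies in one of the simplices $C(B',\pi')$ rather than elsewhere on the bonnet's boundary; this is the ``meets $|\cd\Nb(A^\ast)|$'' claim that the paper asserts without argument, and it does need the structure of the Prop.~\ref{prop7} construction (in particular that each $a_j$ lands in the relative interior of the $j$th facet), not merely the conclusion $b<\vee B$. So your proof is correct in outline and identical in strategy to the paper's; the gap you flag is a gap the paper has as well.
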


\begin{proof}By the proposition, every line in $\R^n$ parallel to $\mathbf{1}$ meets $|\cd\Nb(A^\ast)|$, so the finite part of this set is homeomorphic to $\R^{n-1}$.  As we have shown above, $\cd\Nb(A^\ast)$ is equivalent to the barycentric subdivision of    $\Nb(A*)$. \end{proof}


\begin{thebibliography}{1235}

\bibitem[BHS]{BHS} I.~B\'ar\'any, R.~Howe, H.~E.~Scarf.  The complex of maximal lattice free simplices.  \emph{Math.~Programming} \textbf{66} (1994)
273--281.

\bibitem[BSS]{BSS} I.~B\'ar\'any, H.E.~Scarf \& D.~Shallcross. The topological structure of maximal lattice free convex bodies: the general case. \emph{Math. Programming} \textbf{80} (1998), no. 1, Ser. A, 1-15.

\bibitem[BPS]{BPS} D.~Bayer, I.~Peeva, and B.~Sturmfels.  Monomial resolutions, \emph{Math. Research Letters} \textbf{5}
(1998), 31-46.

\bibitem[B]{B} A. Bj\"orner.  Topological methods. \emph{Handbook of Combinatorics, II\/}, R.L. Graham, M. Gr\"otschel, L. Lov\'asz eds., pp. 1819--1872.  Elsevier, Amsterdam, 1995.

\bibitem[M]{M} T.~McGuire, \emph{Combinatorial Minimal Free Resolutions of Ideals with Monomial and Binomial Generators.}  LSU Ph.D. 2014.  http://etd.lsu.edu/docs/available/etd-04102014-151839/

\bibitem[MS]{MS} E. Miller, B. Sturmfels, {\it Combinatorial Commutative Algebra\/}, Graduate Texts in Mathematics,
Springer, 2005.
 
\bibitem[OW]{OW}  A.~Olteanu \& V.~Welker. The Buchberger resolution. {\tt arXiv:1409.2041v2}\hfill\break September 11, 2014.

\bibitem[Sc]{Sc}Scarf, Herbert E. Production sets with indivisibilities, Part I: Generalities. \emph{Econometrica: Journal of the Econometric Society}  \textbf{49} (1981), 1-32.

\bibitem[St]{St} B.~Sturmfels.  \emph{Gr\"obner Bases and Convex Polytopes.} University Lecture Series.    American Mathematical Society, Providence, RI, 1996.

\end{thebibliography}
\end{document}